\newcounter{thmcount}
\newtheorem{theorem}[thmcount]{Theorem}
\newtheorem{definition}{Definition}[section]
\newtheorem{proposition}{Proposition}[section]
\newtheorem{lemma}[proposition]{Lemma}
\newtheorem{corollary}[proposition]{Corollary}
\theoremstyle{definition}
\newtheorem*{remark}{Remark}
\newcommand{\mc}[1]{\mathcal{ #1 }}
\newcommand{\Z}{\mathbb{Z}}
\newcommand{\N}{\mathbb{N}}
\newcommand{\diam}{\operatorname{diam}}
\newcommand{\ve}{\varepsilon}
\begin{document}

\title{On Li-Yorke Measurable Sensitivity}

\author[Hallett]{Jared Hallett}
\address[Jared Hallett]{Williams College\\ Williamstown, MA 01267, USA}
\email{jdh4@williams.edu}

\author[Manuelli]{Lucas Manuelli}
\address[Lucas Manuelli]{Princeton University\\ Princeton, NJ 08544}
\email{manuelli@mit.edu}

\author[Silva]{Cesar E. Silva}
\address[Cesar Silva]{Department of Mathematics\\
     Williams College \\ Williamstown, MA 01267, USA}
\email{csilva@williams.edu}

\subjclass{Primary 37A40; Secondary
37A05} \keywords{Nonsingular transformation, measure-preserving, ergodic, Li-Yorke}

\begin{abstract}
The notion of Li-Yorke sensitivity has been studied extensively  in the case of topological dynamical systems.  We introduce a measurable version of Li-Yorke sensitivity, for nonsingular (and measure-preserving) dynamical systems, and compare it with various mixing notions. It is known that in the case of nonsingular dynamical systems, conservative ergodic Cartesian square implies double ergodicity, which in turn implies weak mixing, but the converses do not hold in general, though they are all equivalent in the finite measure-preserving case. We show that for nonsingular systems, ergodic Cartesian square implies Li-Yorke measurable sensitivity, which in turn implies weak mixing.  As a consequence we obtain that, in the finite measure-preserving case, Li-Yorke measurable sensitivity is equivalent to weak mixing.  We also show that with respect to totally bounded metrics, double ergodicity implies Li-Yorke measurable sensitivity.
\end{abstract}
\maketitle

\section{Introduction}\label{introduction}

The notion of sensitive dependence for topological dynamical systems  has been studied by many authors,  see, for example, the works \cite{BBCDS, GW93, A-B96} and the references therein. Recently,  various notions of measurable sensitivity have been explored in ergodic theory, see for example \cite{ABC02, He04, Cadre05, J-S08, Huang, G-S08}.

In this paper we are interested in formulating a measurable version of the topological notion of Li-Yorke sensitivity for the case of nonsingular and measure-preserving dynamical systems. 

In Section \ref{sec:measurable_sensitivity}, we review some preliminary definitions and introduce the notion of Li-Yorke measurable sensitivity (also called Li-Yorke M-sensitivity) which is based on the topological notion of Li-Yorke sensitivity in \cite{Ak02} and prove that in the conservative ergodic case it implies W-sensitivity introduced in \cite{G-S08}.  In Section~\ref{S:mixLiYorke} we prove that if
the Cartesian square is conservative ergodic (a condition stronger than weak mixing 
in the nonsingular case \cite{ALW79}) then it is Li-Yorke M-sensitive. 
Section~\ref{sec:li_yorke_sensitivity_and_weak_mixing} shows that for conservative ergodic nonsingular systems, 
ergodic Cartesian square implies Li-Yorke M-sensitivity, which in turn implies weak mixing;
a consequence of this is that in the finite measure-preserving case Li-Yorke sensitivity is 
equivalent to weak mixing. Section~\ref{S:scrambled} studies scrambled sets. In Section~\ref{sec:weak_mixing_and_w_measurable_sensitivity} we consider conservative ergodic infinite measure-preserving transformations which are not W-measurably sensitive. The final sections study entropy and the existence of 
a W-measurably sensitive $\mu$-compatible metric for any conservative ergodic transformation.

\subsection{Acknowledgements}
This paper is based on research by the Ergodic Theory group of the 2011 SMALL summer research project at Williams College. Support for the project was provided by National Science Foundation REU Grant DMS - 0353634 and the Bronfman Science Center of Williams College.
We would like to thank Sergiy Kolyada for bringing \cite{Ak02} to our attention.
We thank Sasha Danilenko and Philippe Thieullen for conversations that improved the paper.
We are indebted to the referee for comments and in particular for suggesting some arguments 
in Section 8.

\section{Preliminary Definitions and Measurable Sensitivity} 
\label{sec:measurable_sensitivity}


A \emph{nonsingular dynamical system} $(X,\mc{S},\mu,T)$ is a standard Borel space $(X,\mc{S})$ with a $\sigma$-finite, nonatomic measure $\mu$ and a nonsingular endomorphism $T:X\to X$ (i.e., for all $A\in\mc{S}$, $T^{-1}(A)\in S$ and $\mu A=0$ if and only if $\mu(T^{-1}A)=0$).  We sometimes take $T$ to be measure-preserving or the measure space to be finite.  We sometimes suppress $\mc{S}$ and write $(X,\mu,T)$.  We say that $T$ is conservative if for all $A$ of positive measure there exists 
$n>0$ such that $\mu(T^{-n}A\cap A)>0$. A set $A$ is positively invariant if 
$A\subset T^{-1}(A)$ and invariant if $T^{-1}(A)=A$. If $T$ is conservative and
$A$ is positively invariant, then $A$ is invariant mod $\mu$ (henceforth all equalities will be interpreted as mod $\mu$). A transformation $T$ is ergodic if whenever $A$ is invariant, then
$\mu(A)=0$ or $\mu(X\setminus A)=0$. A nonsingular transformation $T$ is weakly mixing
if whenever $f$ is an $L^\infty$ function such that $f\circ T=z f$ for $z\in\mathbb C$, then
$f$ is constant a.e. If $T\times T$ is ergodic then $T$ is weakly mixing, but the converse, while true in the finite measure-preserving case, does not hold in general \cite{ALW79}.

We consider metrics $d$ on $X$.  We assume throughout that these are Borel measurable 
on $X\times X$ and bounded by $1$.  We say a metric $d$ on $X$ is \emph{$\mu$-compatible} if $\mu$ assigns positive measure to nonempty, open $d$-balls \cite{J-S08, G-S08}.  It follows from  \cite[Lemma 1.1]{J-S08} that the topology generated by $d$ is separable.  Thus  open sets are measurable as they are countable unions of balls.  The notion of measurable sensitivity was introduced in \cite{J-S08}.

\begin{definition} We say a nonsingular dynamical system $(X,\mu,T)$ is \emph{measurably sensitive} if for every isomorphic mod 0 dynamical system $(X_1,\mu_1,T_1)$ and $\mu_1$-compatible metric $d$ on $X_1$ there exists a $\delta>0$ such that for all $x\in X_1$ and $\ve>0$ there is an $n\in\N$ such that
$$\mu_1\{y\in B_\ve(x):d(T_1^n(x),T_1^n(y))>\delta\}>0. $$ \end{definition}

This definition was refined in \cite{G-S08}.

\begin{definition}
Let $(X,\mu,T)$ be a nonsingular dynamical system and $d$ a $\mu$-compatible metric on $X$.  We say the system is \emph{W-measurably sensitive} with respect to $d$ if there is a $\delta>0$ such that for each $x\in X$
$$\limsup_{n\to\infty}d(T^nx,T^ny)>\delta $$
for a.e. $y\in X$.  The system is \emph{W-measurably sensitive} if it is W-measurably sensitive with respect to each $\mu$-compatible metric $d$.
\end{definition}

Proposition 7.2 in \cite{G-S08} shows that for conservative ergodic transformations these two notions  are equivalent.   Requiring that the condition hold for each $x\in X$ and a.e. $y\in X$ is equivalent to requiring that it hold for a.e. pair $(x,y)\in X^2$ (Proposition A.1, \cite{G-S08}), a notion called pairwise sensitivity introduced in \cite{Cadre05}.   The following classification result  is proved in \cite[Theorem 7.1]{G-S08}.

\begin{theorem}\label{prop:wmsclass} Let $(X,\mu,T)$ be a conservative ergodic nonsingular dynamical system.  Then $T$ is W-measurably sensitive or $T$ is isomorphic mod $0$ to an invertible minimal uniformly rigid isometry on a Polish space.\end{theorem}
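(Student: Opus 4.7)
My plan is to prove this dichotomy by the contrapositive. Assume $T$ is not W-measurably sensitive, and I will construct an isomorphism mod $0$ from $(X,\mu,T)$ to an invertible minimal uniformly rigid isometry on a Polish space. By hypothesis, there is some $\mu$-compatible metric $d$ on $X$ for which W-sensitivity fails. Invoking the equivalence of W-measurable sensitivity with pairwise sensitivity (Proposition A.1 of \cite{G-S08}), this failure means that for every $\delta>0$,
\[
\mu\times\mu\bigl\{(x,y)\in X\times X : \limsup_{n\to\infty} d(T^n x, T^n y)\le \delta\bigr\} > 0.
\]
I would then introduce the Borel pseudo-metric $\rho(x,y):=\limsup_n d(T^n x, T^n y)$, which satisfies $\rho(Tx,Ty)=\rho(x,y)$, so $T$ acts by $\rho$-isometries automatically.

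The strategy is to descend $\rho$ to a genuine metric on a quotient space that turns out to be mod $0$ isomorphic to $X$. A Fubini argument together with conservativity and ergodicity should upgrade the positive-measure closeness above to the statement that for a.e.\ $x$, the set $\{y:\rho(x,y)\le\delta\}$ has positive measure for every $\delta>0$. Defining $x\sim y \iff \rho(x,y)=0$, the quotient $Y=X/\!\sim$ carries the induced metric $\bar\rho$, and $T$ descends to an honest isometry $\tilde T$ on $Y$ with pushforward measure $\nu$ making $(Y,\nu,\tilde T)$ a conservative ergodic nonsingular system on a Polish space. A measurable selection of Kuratowski--Ryll-Nardzewski type should then produce a Borel cross-section and hence an isomorphism mod $0$ between $(X,\mu,T)$ and $(Y,\nu,\tilde T)$.

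The main obstacle is controlling this quotient step: showing that the $\sim$-classes are measurably trivial in a way compatible with the dynamics, so the cross-section argument goes through. This hinges on the delicate interplay between the $T\times T$-invariance of the sets $E_\delta=\{(x,y):\rho(x,y)\le\delta\}$, conservativity, and ergodicity of $T$. Once the isometric Polish model is in hand, the remaining properties follow more routinely: ergodicity plus the isometric structure rules out proper $\tilde T$-invariant closed sets of positive measure, giving minimality; invertibility follows because a conservative nonsingular isometry is essentially bijective; and uniform rigidity---the existence of $n_k\to\infty$ with $\tilde T^{n_k}\to \mr{id}$ uniformly---should follow from a Poincar\'e-recurrence argument exploiting the isometric property together with an Arzel\`a--Ascoli-style compactness for the equicontinuous semigroup $\{\tilde T^n\}$. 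The single hardest point I anticipate is establishing \emph{uniform} rather than merely pointwise rigidity in the possibly infinite-measure nonsingular setting.
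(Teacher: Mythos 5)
There is a genuine gap, and it sits exactly where you flag it. Your plan replaces the metric $d$ by the pseudo-metric $\rho(x,y)=\limsup_n d(T^nx,T^ny)$ and then passes to the quotient $Y=X/\!\sim$; but that quotient is a priori only a \emph{factor} of $(X,\mu,T)$, and a measurable cross-section of the quotient map gives an embedding of $Y$ into $X$, not an isomorphism mod $0$, unless you first show that the $\rho$-classes are singletons off a null set. Nothing in your argument does this, and it cannot be dismissed as a technicality: the failure of W-measurable sensitivity says precisely that the sets $\{(x,y):\rho(x,y)\le\delta\}$ have positive measure for every $\delta$, so the danger that $\{\rho=0\}$ itself is fat is the whole issue, and ruling it out is essentially the content of the theorem. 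The proof this paper relies on (Theorem 7.1 of \cite{G-S08}, whose mechanism is recalled in the proof of Proposition~\ref{prop:imp} here) avoids quotienting altogether by using $d_T(x,y)=\sup_{n\ge 0}d(T^nx,T^ny)$ instead of the $\limsup$: since $d_T\ge d$, this is a genuine metric on $X$ itself for which $T$ is (on a positively invariant conull set) an isometry, and the real work --- done via the failure of sensitivity together with conservative ergodicity --- is showing that $d_T$ is still $\mu$-compatible; one then completes $(X_0,d_T)$ to get the Polish model. Your Fubini/ergodicity step (conull-ness of $\{x:\mu\{y:\rho(x,y)\le\delta\}>0\}$) is fine, but it is not the hard part.

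The closing claims are also not yet proofs. Uniform rigidity cannot be extracted from an Arzel\`a--Ascoli argument, because the Polish space obtained need not be compact (notably in the infinite measure case); in \cite{G-S08} it comes from the monothetic group $G=\overline{\{I,T,T^2,\dots\}}$ with the sup-metric $d_G(S_1,S_2)=\sup_x d_X(S_1x,S_2x)$: right-invariance of $d_G$ plus conservativity applied to a small ball around the identity yields $d_G(T^{n_k},I)\to 0$, which is exactly uniform rigidity. Similarly, invertibility is not a general fact about ``conservative nonsingular isometries being essentially bijective''; it is obtained because $T$ becomes a rotation of the group $G$, and minimality needs the preliminary step of restricting to a conull set of transitive points (ergodicity plus $\mu$-compatibility of the countable ball basis) before completing. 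So the skeleton you propose could perhaps be repaired, but as written the decisive steps --- triviality of the proximal relation $\rho=0$, $\mu$-compatibility of the new metric, and uniform rigidity --- are exactly the ones left as ``should follow,'' and they are where the cited proof expends its effort.
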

	
	The proof of this theorem shows that the isometric metric is $\mu$-compatible.  The following are two technical propositions from \cite{G-S08} that are needed for our work. The proofs follow those in the aforementioned paper and are included for completeness.

	\begin{proposition}
		\label{sup}
		Suppose $T$ is a nonsingular transformation. If for almost every pair $(x,y) \in X \times X$ there exists $n \geq 0$ such that $d(T^n x, T^n y) \geq \delta$, then for almost every pair $(x,y) \in X \times X$ we have $\lim \sup_{n \to \infty} d(T^n x, T^n y) \geq \delta$.
	\end{proposition}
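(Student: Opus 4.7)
The plan is to bootstrap from the existence of a single good $n$ to arbitrarily large good $n$'s by pushing forward under $T \times T$. Set
\[
E_\delta = \{(x,y) \in X \times X : \exists\, n \geq 0,\ d(T^n x, T^n y) \geq \delta\},
\]
so the hypothesis reads $(\mu \times \mu)(E_\delta^c) = 0$. For each $N \geq 0$ let
\[
E_\delta^N = \{(x,y) : \exists\, n \geq N,\ d(T^n x, T^n y) \geq \delta\}.
\]
The reindexing $m = n - N$ shows that $(x,y) \in E_\delta^N$ iff $(T^N x, T^N y) \in E_\delta$, so $E_\delta^N = (T \times T)^{-N}(E_\delta)$.

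The next ingredient is that $T \times T$ is nonsingular on $(X \times X, \mu \times \mu)$. This follows from Fubini: if $(\mu \times \mu)(N) = 0$ then $\mu$-almost every vertical section $N_x$ is null; since $T$ is nonsingular on $(X,\mu)$, the sections of $(T \times T)^{-1}(N)$ (which are of the form $T^{-1}(N_{Tx})$) are then $\mu$-null for $\mu$-a.e.\ $x$, so $(T \times T)^{-1}(N)$ is $(\mu \times \mu)$-null. Consequently, each $E_\delta^N = (T \times T)^{-N}(E_\delta)$ has full $(\mu \times \mu)$-measure.

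Taking a countable intersection, $\bigcap_{N \geq 0} E_\delta^N$ has full measure. But $(x,y)$ lies in this intersection precisely when for every $N$ there is some $n \geq N$ with $d(T^n x, T^n y) \geq \delta$, which forces $\sup_{n \geq N} d(T^n x, T^n y) \geq \delta$ for every $N$, hence $\limsup_n d(T^n x, T^n y) \geq \delta$. Since this set is contained in the set in the conclusion, we are done.

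There is no real obstacle here; the only subtle point is confirming that nonsingularity of $T$ lifts to nonsingularity of $T \times T$, which is the standard Fubini argument sketched above and uses nothing beyond $\sigma$-finiteness of $\mu$.
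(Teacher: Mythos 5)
Your proof is correct and follows essentially the same strategy as the paper's: write the event ``separation occurs at some time $\geq N$'' as the $N$-fold preimage of the original full-measure event, use nonsingularity to see each such preimage has full measure, and intersect over $N$. The only difference is that you work directly on the product, which requires the (correctly sketched) Fubini argument that $T\times T$ is nonsingular on $(X\times X,\mu\times\mu)$, whereas the paper applies nonsingularity of $T$ itself fiberwise to the sections $Z(N,x)$ and so never needs that lemma.
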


	\begin{proof} 
		Let
		\begin{equation*}
			Z(N,x) = \left \{ y \in X : \exists n, d(T^n (T^N x),T^n y) \geq \delta \right \}.
		\end{equation*}
	\noindent	 Then by hypothesis there exists a full measure set $A$ such that $\mu(Z(N,x)^c) = 0$ for each $x \in A$  and for all $N \in \mathbb{N}$. Let
		 \begin{equation*}
			Y(N,x) = \left \{ y \in X : \exists n > N, d(T^n x,T^n y) \geq \delta \right \}.
		\end{equation*}
	\noindent Note that $Y(N,x) = T^{-N}(Z(N,x))$. Since $Z(N,x)$ has full measure and $T$ is nonsingular we see that $Y(N,x)$ also has full measure. Hence
		\begin{equation*}
			\bigcap_{N > 0}Y(N,x)
		\end{equation*}
also has full measure. But this says for each $x \in A$ and almost every $y \in X$ that $\lim \sup_{n \to \infty} d(T^n x, T^n y) \geq \delta$. 
	\end{proof}
	
The proof of the next proposition follows from the same arguments as above.

	\begin{proposition}
		Suppose $T$ is a nonsingular transformation. If for almost every pair $(x,y) \in X \times X$ there exists $n \geq 0$ such that $d(T^n x, T^n y) \leq \delta$,
			then for almost every pair $(x,y) \in X \times X$ we have $\lim \inf_{n \to \infty} d(T^n x, T^n y) \leq \delta$.
	\end{proposition}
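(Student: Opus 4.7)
The plan is to mirror the proof of the preceding proposition almost verbatim, substituting $\leq\delta$ for $\geq\delta$ and $\liminf$ for $\limsup$. The key observation that makes the translation work is that
\[
\liminf_{n\to\infty} d(T^n x, T^n y)\le\delta
\]
is equivalent to saying that for every $N\ge 0$ there exists some $n\ge N$ with $d(T^n x, T^n y)\le\delta$, i.e.\ that infinitely many iterates come within $\delta$.

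First, I would introduce the analogous auxiliary sets
\[
Z(N,x)=\{y\in X:\exists n\ge 0,\ d(T^n(T^N x),T^n y)\le\delta\},\qquad Y(N,x)=\{y\in X:\exists n\ge N,\ d(T^n x,T^n y)\le\delta\}.
\]
By hypothesis and Fubini applied to the full-measure set $\{(x,y):\exists n,\ d(T^n x,T^n y)\le\delta\}\subset X\times X$, there is a full-measure set $A\subset X$ such that $Z(0,x)$ has full measure for every $x\in A$. Since $Z(N,x)=Z(0,T^N x)$, I would pass to the intersection $A^*=\bigcap_{N\ge 0} T^{-N}(A)$, which is still of full measure because $T$ is nonsingular and each $T^{-N}(A^c)$ is null. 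Then for each $x\in A^*$ and every $N\ge 0$, $Z(N,x)$ has full measure.

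Next, I would verify the identity $Y(N,x)=T^{-N}(Z(N,x))$: a point $y$ belongs to $Y(N,x)$ exactly when there is $m\ge 0$ with $d(T^m(T^N x),T^m(T^N y))\le\delta$, i.e.\ when $T^N y\in Z(N,x)$. Nonsingularity of $T^N$ then yields that $Y(N,x)$ has full measure for every $x\in A^*$ and every $N$. Intersecting over $N$ gives a full-measure set of $y$'s, and for each such $y$ there are infinitely many $n$ with $d(T^n x,T^n y)\le\delta$, so $\liminf_{n\to\infty} d(T^n x,T^n y)\le\delta$.

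There is no real obstacle here: the argument is essentially a word-for-word transcription. The only point requiring a moment's thought is the conversion between the quantifier form of the conclusion and the $\liminf$ statement, and the handling of all $N$ simultaneously via the intersection $A^*$, which is the exact same device used in the previous proof.
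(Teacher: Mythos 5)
Your proposal is correct and is essentially the same argument the paper intends: the paper simply notes that the proof follows from the same reasoning as the preceding proposition, which is exactly the transcription you carry out (with the minor, harmless refinement of making the full-measure set $A^*=\bigcap_{N\ge 0}T^{-N}(A)$ explicit).
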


	The notion of W-measurable sensitivity adapts the notion of sensitivity to initial conditions from topological dynamics to the measurable case.  In topological dynamics there is also the notion of Li-Yorke sensitivity, in which points are not only required to separate but also to come back together. We give the definitions of topological Li-Yorke sensitivity as in \cite{Ak02}. 
	
	We recall the following definition from topological dynamics.

\begin{definition}\label{def:prox}
	Let $(X,d,T)$ be a topological dynamical system. A pair $(x,y)$ is said to be \emph{proximal} if
	\begin{equation*}
		\liminf_{n \to \infty} d(T^n x, T^n y) = 0.
	\end{equation*}
\end{definition}

Akin and Kolyada introduced the following in \cite{Ak02}.

\begin{definition}\label{def:LiYorke}
	Let $(X,d,T)$ be a topological dynamical system. We call the system \emph{Li-Yorke sensitive} if there exists an $\ve > 0$ such that every $x \in X$ is a limit of points $y \in X$ such that the pair $(x,y)$ is proximal but whose orbits are at least $\ve$ apart at arbitrarily large times.
\end{definition}

In this paper we consider the measure-theoretic analogue of Li-Yorke sensitivity.

\begin{definition}\label{def:} 
	Let $(X,\mu,T)$ be a nonsingular dynamical system and $d$ a $\mu$-compatible metric on $X$.  We say that a pair $(x,y)$ is a \emph{Li-Yorke pair} if
$$\begin{array}{ll}
\liminf_{n\to \infty}  d(T^n x,T^n y) =0 \text{ and } \limsup_{n\to \infty}  d(T^n x,T^n y) >0.
\end{array}$$
We say $(X,\mu,T)$ is \emph{Li-Yorke measurably sensitive for the metric $d$} if the set of Li-Yorke pairs $(x,y) \in X \times X$ has full measure.  We say it is \emph{Li-Yorke measurably sensitive} (henceforth Li-Yorke M-sensitive) if it is Li-Yorke M-sensitive for all $\mu$-compatible metrics.	
\end{definition}

Note that atomic measures are never sensitive.  We now show that Li-Yorke M-sensitivity is a (measurable) isomorphism invariant.  We need the following lemma (\cite{G-S08}, Lemma 6.1).

\begin{lemma}\label{prop:extendmetric}
Let $(X,\mc{S})$ be a standard Borel space with nonatomic measure $\mu$.  Let $U\subset X$ be a Borel subset of full measure and $d_U$ a $\mu$-compatible metric defined on $U$.  Then $d_U$ can be extended to a $\mu$-compatible metric $d_X$ on $X$ such that $d_U$ and $d_X$ agree on a set of full measure contained in $U\times U$.
\end{lemma}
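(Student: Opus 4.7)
The plan is to realize $(X,\mu)$ as a Borel subspace of the Polish completion $\hat U$ of $(U,d_U)$ via an injective, measure-preserving Borel map $\psi : X \to \hat U$ that is the identity on a co-null part of $U$, and then to take $d_X$ to be the pullback of $d_{\hat U}$ along $\psi$. The main obstacle is that the null set $V := X \setminus U$ may be large (cardinality continuum) while the metric boundary $\hat U \setminus U$ can be too small -- even empty, if $(U,d_U)$ is already complete -- leaving no room to inject $V$. I would sidestep this by first reserving extra room \emph{inside} $U$.

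First, I would build a null ``reservoir'' $N \subset U$ that is Borel, $d_U$-dense in $U$, and of cardinality $2^{\aleph_0}$. Since $d_U$ is $\mu$-compatible and $(U,d_U)$ is separable, fix a countable base $\{O_n\}$ of nonempty open sets, each of positive $\mu$-measure. Using $\sigma$-finiteness and nonatomicity, each restriction $\mu|_{O_n}$ is isomorphic mod $0$ to Lebesgue on an interval, so I can extract a Borel null subset $N_n \subset O_n$ of cardinality continuum; set $N := \bigcup_n N_n$ and $U_0 := U \setminus N$. Then $\mu(N)=0$, and because $N$ is null it has empty interior in $U$ (by $\mu$-compatibility of $d_U$), so $U_0$ is $d_U$-dense in $U$.

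Next I would form the completion $\hat U$ of $(U,d_U)$, a Polish space with metric bounded by $1$, and push $\mu|_{U_0}$ forward to a $\sigma$-finite Borel measure $\hat\mu$ on $\hat U$ along the inclusion. Because $U_0$ is dense in $\hat U$ and $\mu|_{U_0}$ has full support in $U_0$, the measure $\hat\mu$ has full topological support on $\hat U$: every nonempty open $d_{\hat U}$-ball has positive $\hat\mu$-measure. Let $V_0 := V \cup N = X \setminus U_0$ and $\hat V_0 := \hat U \setminus U_0$; both are standard Borel, and since $N \subseteq \hat V_0$ gives $|\hat V_0| \geq 2^{\aleph_0} \geq |V_0|$, Kuratowski's isomorphism theorem supplies a Borel injection $\phi : V_0 \hookrightarrow \hat V_0$. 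Define $\psi : X \to \hat U$ to be the inclusion on $U_0$ and $\phi$ on $V_0$; this is Borel, injective (its images on $U_0$ and $V_0$ are disjoint by construction), and satisfies $\psi_* \mu = \hat\mu$.

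Finally, I would set $d_X(x,y) := d_{\hat U}(\psi(x),\psi(y))$. Injectivity of $\psi$ makes $d_X$ a genuine metric on $X$, Borel on $X \times X$ and bounded by $1$. For $x,y \in U_0$ we have $\psi(x)=x$ and $\psi(y)=y$, and $d_{\hat U}$ restricts to $d_U$ on $U \times U$, so $d_X$ agrees with $d_U$ on the full-measure subset $U_0 \times U_0 \subseteq U \times U$. For any $x \in X$ and $r>0$, $B_r^{d_X}(x) = \psi^{-1}(B_r^{d_{\hat U}}(\psi(x)))$, so $\mu(B_r^{d_X}(x)) = \hat\mu(B_r^{d_{\hat U}}(\psi(x))) > 0$ by full support of $\hat\mu$, establishing $\mu$-compatibility. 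The decisive step is the reservoir construction, which keeps $\hat V_0$ uncountable regardless of whether $(U,d_U)$ is complete and thus makes the Borel injection $\phi$ available.
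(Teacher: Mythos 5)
The paper itself never proves this lemma: it is imported verbatim from \cite{G-S08} (Lemma 6.1), and the only in-text comment is that the full-measure set of agreement can be taken inside $U\times U$, ``as follows from the proof in the original paper.'' So there is no in-paper argument to compare against, and your proposal has to stand on its own --- which it essentially does. The pullback $d_X=d_{\hat U}\circ(\psi\times\psi)$ is a genuine metric because $\psi$ is injective, it is Borel and bounded by $1$, it literally equals $d_U$ on $U_0\times U_0$, which is a full-measure subset of $U\times U$, and $\mu$-compatibility follows because every $d_X$-ball is $\psi^{-1}$ of a nonempty open subset of $\hat U$, which meets $U$ in a nonempty $d_U$-open set, hence contains a $d_U$-ball of positive measure, and deleting the null set $N$ does not destroy this. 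The reservoir $N$ is exactly the right device for the case where $(U,d_U)$ is already complete and $\hat U\setminus U$ would otherwise be too small to absorb $X\setminus U$.

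One step is asserted rather than justified: that $\hat V_0=\hat U\setminus U_0$ is standard Borel, i.e.\ that $U$ (hence $U_0$) is a Borel subset of the completion $\hat U$. This is not automatic from $d_U$ being a Borel function on $U\times U$; it follows from the Lusin--Souslin injective-image theorem applied to the inclusion $(U,\mathcal{S}|_U)\hookrightarrow\hat U$, which is Borel because $d_U$-open sets are countable unions of $d_U$-balls (separability) and each ball lies in $\mathcal{S}$ (Borel measurability of $d_U$). Without this, ``Kuratowski's isomorphism theorem'' cannot be invoked for $\hat V_0$. The gap is real but easily repaired, either by citing Lusin--Souslin, or more cheaply by noting that $N$ is an uncountable analytic subset of $\hat U$ disjoint from $U_0$, hence contains a Cantor set $K\subset\hat V_0$ into which the standard Borel space $V_0$ Borel-injects. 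Finally, be aware that your $d_X$ is not a literal extension of $d_U$ (distances involving points of $N$ are changed), but the lemma only requires agreement on a full-measure subset of $U\times U$, which your $U_0\times U_0$ provides.
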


The fact that the set is contained in $U\times U$ is not in the statement but follows from the proof of the lemma in the original paper.  We now proceed in a way similar to Proposition 6.2 of the same paper.

\begin{proposition}\label{prop1:}
Suppose nonsingular dynamical system $(X,\mu,T)$ is Li-Yorke M-sensitive.  Then any isomorphic system $(Y,\nu,S)$ is also Li-Yorke M-sensitive.
\end{proposition}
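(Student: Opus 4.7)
The plan is to pull back a given $\nu$-compatible metric on $Y$ to a metric defined on a full-measure subset of $X$, extend it to all of $X$ via Lemma~\ref{prop:extendmetric}, apply the Li-Yorke M-sensitivity hypothesis for $(X,\mu,T)$, and push the conclusion forward to $Y$.

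Concretely, let $\phi: X_0 \to Y_0$ be a bimeasurable bijection between forward-invariant full-measure subsets realizing the isomorphism mod $0$, so that $\phi \circ T = S \circ \phi$ on $X_0$ and $\phi_*\mu$ is equivalent to $\nu$. Given a $\nu$-compatible metric $d_Y$ on $Y$, define $d'_X(x_1,x_2) := d_Y(\phi(x_1),\phi(x_2))$ on $X_0$. This is a Borel metric, and $\mu$-compatibility follows since a $d'_X$-ball around $x$ pulls back a positive-$\nu$-measure $d_Y$-ball around $\phi(x)$, and hence has positive $\mu$-measure. By Lemma~\ref{prop:extendmetric}, extend $d'_X$ to a $\mu$-compatible metric $d_X$ on all of $X$ that agrees with $d'_X$ on a full-measure subset $E \subseteq X_0 \times X_0$.

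By hypothesis, the set $L$ of Li-Yorke pairs for $d_X$ has full $\mu\times\mu$-measure in $X\times X$. Since $T$ is nonsingular, so is $T \times T$ with respect to $\mu\times\mu$, and therefore $F := \bigcap_{n \geq 0}(T \times T)^{-n}(E)$ still has full measure. For every $(x_1,x_2) \in L \cap F$ and every $n \geq 0$,
\[
d_X(T^n x_1, T^n x_2) \;=\; d'_X(T^n x_1, T^n x_2) \;=\; d_Y(S^n \phi(x_1), S^n \phi(x_2)),
\]
so both the $\liminf = 0$ and $\limsup > 0$ conditions transfer term by term from the $T$-orbit of $(x_1,x_2)$ to the $S$-orbit of $(\phi(x_1),\phi(x_2))$. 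Pushing $L \cap F$ forward under $\phi\times\phi$ and using $\phi_*\mu \sim \nu$, the set of Li-Yorke pairs in $Y\times Y$ for $d_Y$ has full $\nu\times\nu$-measure. Since $d_Y$ was arbitrary, $(Y,\nu,S)$ is Li-Yorke M-sensitive.

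The main obstacle, and the reason for introducing $F$, is that $d_X$ and $d'_X$ a priori agree only on $E$; equality at later time steps $(T^n x_1, T^n x_2)$ is not automatic and must be secured by discarding a null set at each iterate. Nonsingularity of $T\times T$ is exactly what lets the countable intersection $F$ remain of full measure, enabling the orbit-wise identification of the two metrics that makes the transfer of the Li-Yorke conditions possible.
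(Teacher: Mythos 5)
Your proof is correct and follows essentially the same route as the paper: pull the metric back through the isomorphism, extend it with Lemma~\ref{prop:extendmetric}, intersect the iterated preimages of the agreement set so the two metrics coincide along entire orbits, and transfer the Li-Yorke conditions through $\phi\times\phi$. The only difference is that you argue directly for an arbitrary $\nu$-compatible metric while the paper phrases the same argument as a contradiction, which is immaterial.
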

\begin{proof}
Suppose $(Y,\nu,S)$ is not Li-Yorke M-sensitive.  Then there is a $\nu$-compatible metric $d_Y$  on $Y$ for which $(Y,\nu,S)$ is not Li-Yorke M-sensitive.  As the systems are isomorphic, there are Borel sets $U\subset X$ and $V\subset Y$ of full measure and a bijection $\phi:U\to V$ such that $\phi\circ T=S\circ\phi$.  Define a $\mu$-compatible metric $d_U$ on $U$ by $d_U(x,y)=d_Y(\phi(x),\phi(y))$.  Applying Lemma \ref{prop:extendmetric} extends $d_U$ to a $\mu$-compatible metric $d_X$ on $X$ which agrees with $d_U$ on a set $X_0\subset U\times U$ of full measure in $X\times X$.  By hypothesis $T$ is Li-Yorke M-sensitive, so the set $L\subset X^2$ of Li-Yorke pairs has full measure.  It follows that the set $A= \bigcap (T\times T)^{-n}(X_0\cap L)$ has full measure.  Note that if $(x,y)\in A$, it is Li-Yorke, and for all $n$ we have that $d_U(T^nx,T^ny)=d_X(T^nx,T^ny)$ and $\phi(T^nx),\phi(T^ny)\in V$ (since $T^nx,T^ny\in U$).  Now $\phi\times\phi(A)$ has full measure in $Y\times Y$, and for all $(\phi(x),\phi(y))\in \phi\times\phi(A)$ we have for all $n$ that
$$d_Y(S^n\phi(x),S^n\phi(y))=d_U(T^nx,T^ny)=d_X(T^nx,T^ny).$$
It follows that all pairs in $\phi\times\phi(A)$ are Li-Yorke for $d_Y$, a contradiction.
\end{proof}

Li-Yorke M-sensitivity is not a priori stronger than W-measurable sensitivity.  But this turns out to be the case when the system in question is conservative ergodic.

\begin{proposition}
	Let $(X,\mu,T)$ be a conservative ergodic and nonsingular dynamical system.  If it is Li-Yorke M-sensitive, then it is W-measurably sensitive. \end{proposition}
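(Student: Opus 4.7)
The plan is to argue by contradiction using the dichotomy provided by Theorem~\ref{prop:wmsclass}. Suppose $(X,\mu,T)$ is conservative ergodic, nonsingular, and Li-Yorke M-sensitive, yet not W-measurably sensitive. By Theorem~\ref{prop:wmsclass}, together with the remark immediately following it that the isometric metric produced in its proof is $\mu$-compatible, $T$ is then isomorphic mod $0$ to an invertible minimal uniformly rigid isometry $(Y,\nu,S)$ on a Polish space carrying a $\nu$-compatible metric $d_Y$ with respect to which $S$ acts by isometries.

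By Proposition~\ref{prop1:}, Li-Yorke M-sensitivity passes through mod-$0$ isomorphism, so $(Y,\nu,S)$ is also Li-Yorke M-sensitive. Applying the definition to the particular $\nu$-compatible metric $d_Y$, the set of Li-Yorke pairs for $d_Y$ must have full $\nu\times\nu$-measure. However, since $S$ is a $d_Y$-isometry we have
\[
 d_Y(S^n x, S^n y) \;=\; d_Y(x,y) \qquad \text{for every } n\in\N,\ x,y\in Y,
\]
so $\liminf_n d_Y(S^n x, S^n y) = d_Y(x,y)$, which vanishes only on the diagonal. Because $\nu$ is nonatomic, the diagonal has $\nu\times\nu$-measure zero, so no pair off a null set can satisfy the first Li-Yorke condition. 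Thus the set of Li-Yorke pairs has measure zero, contradicting Li-Yorke M-sensitivity.

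The only point that needs care is that the metric produced by Theorem~\ref{prop:wmsclass} really is $\mu$-compatible, so that Li-Yorke M-sensitivity may be invoked for it; this is precisely what the remark after that theorem records. Once that is in place, the heart of the argument is the trivial observation that an isometry admits no proximal pairs outside the diagonal, so the Li-Yorke M-sensitivity hypothesis is incompatible with the isometric alternative of the dichotomy and W-measurable sensitivity must hold.
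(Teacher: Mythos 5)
Your proof is correct and follows essentially the same route as the paper: the contrapositive/contradiction via the dichotomy of Theorem~\ref{prop:wmsclass}, transporting Li-Yorke M-sensitivity through the mod-$0$ isomorphism (Proposition~\ref{prop1:}), and concluding that an isometry for a $\mu$-compatible metric cannot be Li-Yorke M-sensitive. You merely spell out the final step (no off-diagonal pair is proximal under an isometry, and the diagonal is null) that the paper leaves implicit, which is a harmless and accurate elaboration.
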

\begin{proof}
We show the contrapositive.  If $T$ is not W-measurably sensitive, then by Theorem \ref{prop:wmsclass} it is isomorphic mod 0 to an isometry.  But then the isomorphic system is both Li-Yorke M-sensitive and an isometry for a $\mu$-compatible metric, which is impossible.
\end{proof}

\section{Mixing Conditions Stronger than Li-Yorke Sensitivity}\label{S:mixLiYorke}

We study not just the $\liminf$ and $\limsup$ but also the set of distances to which $T$ separates pairs.  Define
$$\mc{N}=\{r:\text{for a.e. }(x,y)\in X^2\text{, }\exists\{n_k\}\text{ s.t. }d(T^{n_k}x,T^{n_k}y)\to r\}. $$
\noindent  Notice that $0\in\mc{N}$ for Li-Yorke M-sensitive systems.  Further, it is easily shown that $\mc{N}$ is closed.  We then have the following result.  Let $G$ be the set of values taken on by the metric $d$.

\begin{proposition}	\label{prop:value} 
	 Let $T\times T$ be conservative ergodic nonsingular and $d$ be a $\mu$-compatible metric on $(X,\mu)$.  Then for a.e $(x,y)\in X^2$, we have that $(T^nx,T^ny)$ is dense in the product topology, so that for any $r\in G$ there is an $\{n_k\}$ such that $d(T^{n_k}x,T^{n_k}y)\to r$. \end{proposition}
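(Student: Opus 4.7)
The plan is to combine $\mu$-compatibility (which supplies a countable basis of positive-measure open balls) with conservative ergodicity of $T\times T$ (which forces almost every orbit of $T\times T$ to visit every set of positive $(\mu\times\mu)$-measure infinitely often), and then read off the conclusion about the values of $d$ by continuity.

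First, by $\mu$-compatibility and the cited separability of the $d$-topology, fix a countable base $\{B_i\}_{i\in\N}$ for the topology on $X$ consisting of open $d$-balls, each with $\mu(B_i)>0$. Then $\{B_i\times B_j\}_{i,j\in\N}$ is a countable base for the product topology on $X\times X$, and every such rectangle has positive $\mu\times\mu$ measure (here one uses $\sigma$-finiteness of $\mu$ together with $\mu(B_i),\mu(B_j)>0$).

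Second, I would invoke the standard consequence of conservative ergodicity applied to the transformation $T\times T$ on $(X\times X,\mu\times\mu)$: for any fixed measurable set $A\subset X\times X$ of positive measure, the set
\[
E_A=\{(x,y)\in X^2:(T^nx,T^ny)\in A\text{ for infinitely many }n\}
\]
has full $(\mu\times\mu)$-measure. (The union $\bigcup_n(T\times T)^{-n}A$ is invariant mod $\mu\times\mu$ by conservativity and nonsingularity, has positive measure, hence full measure by ergodicity; conservativity then upgrades ``visits $A$ at least once'' to ``visits $A$ infinitely often''.) Applying this to each $A=B_i\times B_j$ and intersecting the full-measure sets $E_{B_i\times B_j}$ over the countable family gives a full-measure set $E\subset X^2$ such that for every $(x,y)\in E$ the orbit $\{(T^nx,T^ny)\}_{n\geq 0}$ meets every basic product open set, and hence is dense in $X\times X$.

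Finally, given any $r\in G$, choose $(x',y')\in X^2$ with $d(x',y')=r$. For $(x,y)\in E$, density supplies $n_k$ with $(T^{n_k}x,T^{n_k}y)\to(x',y')$ in the product topology, and since $d$ is a metric it is continuous on $X\times X$ in that topology, so $d(T^{n_k}x,T^{n_k}y)\to d(x',y')=r$. The main thing to check carefully is the step that turns ``ergodic + nonsingular + conservative'' into ``almost every orbit returns infinitely often to every positive-measure set''; this is the only nontrivial ingredient, and everything else is a countable-intersection bookkeeping argument plus the continuity of the metric.
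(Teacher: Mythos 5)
Your proposal is correct and follows essentially the same route as the paper: a countable base of positive-measure balls, ergodicity of $T\times T$ to get a full-measure set of pairs with dense product orbits, and then approximation of a pair realizing the distance $r$. The only difference is cosmetic -- you spell out the conservativity/ergodicity step and use continuity of $d$ where the paper picks balls $B(z_1,1/k)$, $B(z_2,1/k)$ explicitly.
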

\begin{proof}
Since $(X,d)$ is separable it has a countable topological basis $\{B_i\}$ of balls.  As the metric is $\mu$-compatible, $B_i\times B_j$ has positive measure in the product space for each $i$ and $j$.  It then follows from the ergodicity of $T\times T$ that 
$$A=\bigcap_{i,j}\bigcup_n (T\times T)^{-n}(B_i\times B_j) $$
has full measure.  By construction $(T^nx,T^ny)$ is dense in the product topology for all $(x,y)\in A$.  Now for each $k$ one can choose $n_k$ such that $T^{n_k}x\in B(z_1,1/k)$ and $T^{n_k}y\in B(z_2,1/k)$ where $d(z_1,z_2)=r$.
\end{proof}

\begin{corollary}Under the hypotheses of Proposition (\ref{prop:value}) $\overline{G}\subset\mc{N}$.  In particular, $T$ is Li-Yorke M-sensitive and W-measurably sensitive. \end{corollary}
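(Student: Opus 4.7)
The plan is to observe that this is essentially a direct packaging of Proposition \ref{prop:value} together with the remark that $\mathcal{N}$ is closed. For each fixed $r\in G$, the proposition produces a full-measure set of pairs $(x,y)\in X^2$ along which the orbit of $(x,y)$ under $T\times T$ is dense, so for that $r$ one can extract a subsequence $n_k$ with $d(T^{n_k}x,T^{n_k}y)\to r$. Hence $G\subset\mathcal{N}$, and because $\mathcal{N}$ is closed (as noted right before the proposition), passing to closures gives $\overline{G}\subset\mathcal{N}$.

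Next I would deduce Li-Yorke M-sensitivity from the inclusion $\overline{G}\subset\mathcal{N}$. Clearly $0\in G$ since $d(x,x)=0$ for any $x$, and because $\mu$ is nonatomic we can pick two points at positive distance, producing some $r_0>0$ in $G$. Applying $\overline{G}\subset\mathcal{N}$ to both $0$ and $r_0$ and intersecting the two resulting full-measure sets, we get a single full-measure set of pairs $(x,y)$ for which $\liminf_n d(T^nx,T^ny)=0$ while $\limsup_n d(T^nx,T^ny)\ge r_0>0$. Thus almost every pair is Li-Yorke, which is Li-Yorke M-sensitivity for the metric $d$; since $d$ was an arbitrary $\mu$-compatible metric, $T$ is Li-Yorke M-sensitive.

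Finally, W-measurable sensitivity follows from the previous proposition in this section (Li-Yorke M-sensitive plus conservative ergodic implies W-measurably sensitive), using that conservative ergodicity of $T\times T$ forces conservative ergodicity of $T$. The only real subtlety is the upgrade from $G$ to $\overline{G}$, but this is handled for free by the closedness of $\mathcal{N}$, so no step should present any genuine obstacle.
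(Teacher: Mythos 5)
Your argument is correct and matches the paper's intended (unwritten) justification: Proposition \ref{prop:value} gives $G\subset\mc{N}$, closedness of $\mc{N}$ upgrades this to $\overline{G}$, and since $0$ and some $r_0>0$ lie in $G$ for every $\mu$-compatible metric, almost every pair is Li-Yorke, with W-measurable sensitivity then following for the conservative ergodic $T$ (indeed $T\times T$ conservative ergodic does imply $T$ conservative ergodic, as you assert). No gaps worth noting.
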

	
We now investigate another condition under which $T$ is Li-Yorke M-sensitive.  The ergodicity of $T\times T$ is a strong condition in general.  We can extend the results from the $T\times T$ ergodic case to similar results in the doubly ergodic (i.e, for all sets of positive measure $A$ and $B$ iterates of $A$ intersect both $A$ and $B$ in positive measure) case.  The main difference is the necessity of requiring the metric to have some degree of compactness.  Define $H$ to be the set of $r\in G$ such that for every ball $B$ there is a point $x\in B$ and $y\in X$ such that $d(x,y)=r$.  Note that $0\in H$. 

\begin{proposition}
	\label{prop:doubly_ergodic}
	Let $T$ be doubly ergodic and let $d$ be a $\mu$-compatible metric such that $(X,d)$ is compact.  For each $r\in H$, for all $x \in X$, for a.e. $y \in X$ there is a sequence $\{n_k\}$ such that $d(T^{n_k}x,T^{n_k}y)\to r$.
\end{proposition}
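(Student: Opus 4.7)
The plan is to adapt the strategy of Proposition~\ref{prop:value}, exploiting the compactness of $(X,d)$ to reduce to a finite cover so that double ergodicity of $T$ can substitute for ergodicity of $T\times T$.

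First, I fix $\ve>0$, $x\in X$, and $r\in H$. By compactness, I choose a finite cover $B_1,\ldots,B_N$ of $X$ by open balls of radius $\ve/3$; each has positive measure since $d$ is $\mu$-compatible. Since $r\in H$, for each $i$ I pick $p_i\in B_i$ and $q_i\in X$ with $d(p_i,q_i)=r$, and set $V_i=B(q_i,\ve/3)$, which also has positive measure. The triangle inequality then guarantees that $T^n x\in B_i$ and $T^n y\in V_i$ for some $i$ force $|d(T^n x,T^n y)-r|<2\ve/3$.

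The main reduction is to show: for a.e.\ pair $(x,y)\in X\times X$, there exist $n\ge 0$ and $i\in\{1,\ldots,N\}$ with $(T^n x,T^n y)\in B_i\times V_i$. Once this is established for each $\ve=1/k$, intersecting the resulting full-measure sets of pairs and diagonalizing yields a sequence $\{n_k\}$ with $d(T^{n_k}x,T^{n_k}y)\to r$. An argument in the style of Proposition~\ref{sup} then upgrades ``a.e.\ pair'' to ``for all $x$ and a.e.\ $y$'' (as in the equivalence from Proposition A.1 of \cite{G-S08}) and ensures $n_k\to\infty$.

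To obtain the main reduction, I study $W=\bigcup_{i=1}^N(B_i\times V_i)\subset X\times X$, which has positive product measure, and argue that $\bigcup_n(T\times T)^{-n}W$ has full $\mu\times\mu$ measure. The essential structural fact is that the first-coordinate projection of $W$ is all of $X$: for every $n$ there is some index $i(n)$ with $T^n x\in B_{i(n)}$, so membership of $(T^n x,T^n y)$ in $W$ reduces to the single condition $T^n y\in V_{i(n)}$. Since $V_{i(n)}$ ranges over only $N$ possible sets, a pigeonhole argument together with double ergodicity applied to each of the finitely many pairs $(B_i,V_i)$ should produce the required synchronization between hitting times of $B_i$ by the orbit of $x$ and hitting times of $V_i$ by the orbit of $y$.

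The main obstacle will be making this synchronization rigorous. In the $T\times T$-ergodic setting of Proposition~\ref{prop:value}, the saturation of any positive-measure rectangle is automatically full, but double ergodicity alone does not give this; one must exploit both the special structure of $W$ (a union of rectangles whose first projection is all of $X$) and the finiteness of the cover, and then apply the definition of double ergodicity repeatedly to the pairs $(B_i,V_i)$. Compactness of $(X,d)$ is therefore essential to the argument and explains the hypothesis on the metric.
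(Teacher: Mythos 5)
Your outline correctly identifies the right ingredients (finite cover from compactness, the definition of $H$, double ergodicity in place of ergodicity of $T\times T$), but it stops exactly at the crux and leaves it unresolved: the synchronization step. Applying double ergodicity separately to each of the finitely many pairs $(B_i,V_i)$ gives, for each $i$, \emph{some} time $n_i$ at which iterates of a positive-measure set meet $V_i$, but different $i$ may require different times, and your pigeonhole observation (that $T^n x\in B_{i^*}$ for infinitely many $n$, for some fixed $i^*$) does not help, because double ergodicity gives no control over hitting $V_{i^*}$ along a \emph{prescribed} infinite set of times. Likewise, the claim that $\bigcup_n(T\times T)^{-n}W$ has full product measure is essentially an ergodicity-of-the-square statement, which is exactly what the hypothesis does not provide; you cannot get it from double ergodicity by the saturation argument of Proposition~\ref{prop:value}. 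Since you explicitly flag this step as an obstacle ``to be made rigorous,'' the proposal is an incomplete sketch rather than a proof.

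The paper closes this gap with two moves you are missing. First, it argues by contradiction: if the conclusion fails for some $r\in H$ and some $x$, then the bad $y$'s form a positive-measure set, and after passing to a positive-measure subset one gets uniform constants $N$ and $\ve$ with $|d(T^nx,T^ny)-r|\ge\ve$ for all $n\ge N$ and all $y$ in that set $B$. Second, it invokes the fact that a doubly ergodic transformation is $k$-ergodic \cite{DE01}: for the finite cover $B_1,\dots,B_k$ by $\ve/2$-balls there is a \emph{single} time $n$ with $\mu(T^{-n}B_i\cap B)>0$ for every $i$ simultaneously. Then whichever ball $B_j$ contains $T^nx$, the definition of $H$ produces $x'\in B_j$ and $y'$ with $d(x',y')=r$, and $y'$ lies in some $B_\ell$; since $\mu(T^{-n}B_\ell\cap B)>0$, some $y\in B$ has $|d(T^nx,T^ny)-r|<\ve$, a contradiction. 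Note also that working ``for all $x\in X$'' (not a.e.\ $x$) comes for free in this contradiction scheme, whereas your route through a.e.\ pairs would need a separate, nontrivial upgrade (the analogue of Proposition A.1 of \cite{G-S08} is stated for the sensitivity property, not for approach to a prescribed distance $r$); and your triangle-inequality bound should read $|d(T^nx,T^ny)-r|<\ve$ rather than $2\ve/3$, a harmless slip. If you add the $k$-ergodicity lemma and restructure as a contradiction with uniform $(N,\ve)$ on a positive-measure set, your argument becomes the paper's proof.
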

\begin{proof}
Suppose to the contrary.  Then there is some $r\in H$, $x\in X$ and positive measure set $B\subset X$ such that for no $y$ in $B$ is it the case that $d(T^{n_k}x,T^{n_k}y)\to r$ for any sequence $\{n_k\}$.  Then for each $y\in B$ there is an $N$ and an $\ve$ such that $|d(T^nx,T^ny)-r|\geq \ve$ for all $n\geq N$.  Thus we can find an $N$ and an $\ve$ for which this property holds on a positive measure subset of $B$ (which for simplicity we just relabel $B$).  Cover $X$ with a finite number of $\ve/2$-balls $B_1,\dots,B_k$.  As $T$ is doubly ergodic, it is $k$-ergodic (\cite{DE01}), so we can choose an $n$ so that $\mu(T^{-n}B_i\cap B)>0$ for each $i=1,\dots,n$.  We have that $T^nx$ is contained in some ball $B_j$, and by hypothesis this ball contains a point $x'$ such that for some $y'$ in the space $d(x',y')=r$.  Now $y'$ is contained in some ball $B_\ell$.  Consider any $y\in T^{-n}B_\ell\cap B$.  Then $r-\ve< d(T^nx,T^ny)< r+\ve$ for all $y\in B'$, a contradiction.
\end{proof}

\begin{lemma}\label{lem:denoiso}
	If $T$ is doubly ergodic then it does not admit a $\mu$-compatible metric for which it is an isometry.
\end{lemma}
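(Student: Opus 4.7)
The plan is to argue by contradiction. Assume $T$ admits a $\mu$-compatible metric $d$ for which $T$ is an isometry, so that $d(T^n x, T^n y) = d(x,y)$ for all $x,y \in X$ and all $n \geq 0$.

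The first step is to isolate two well-separated positive-measure balls. Since $\mu$ is nonatomic, $X$ contains at least two distinct points, so I can pick $p, q \in X$ with $\alpha := d(p,q) > 0$. Setting $\delta = \alpha/4$, consider the two balls $B_1 = B(p, \delta)$ and $B_2 = B(q, \delta)$. By $\mu$-compatibility both have positive measure, and by the triangle inequality any $y_1 \in B_1$ and $y_2 \in B_2$ satisfy
\[
d(y_1, y_2) \geq d(p,q) - d(p,y_1) - d(q,y_2) > \alpha - 2\delta = 2\delta.
\]
Meanwhile, any two points that both lie in $B_1$ are within distance $2\delta$ of each other.

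The second step is to invoke double ergodicity to produce a contradiction with the isometry condition. Applied to the positive-measure sets $B_1$ and $B_2$, double ergodicity yields an integer $n > 0$ with $\mu(T^{-n}B_1 \cap B_1) > 0$ and $\mu(T^{-n}B_1 \cap B_2) > 0$. Pick $y_1 \in T^{-n}B_1 \cap B_1$ and $y_2 \in T^{-n}B_1 \cap B_2$. By the previous step, $d(y_1, y_2) > 2\delta$, but $T^n y_1, T^n y_2 \in B_1$ gives $d(T^n y_1, T^n y_2) < 2\delta$. This contradicts $d(T^n y_1, T^n y_2) = d(y_1, y_2)$, completing the proof.

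There is no serious obstacle here: the argument is just the standard observation that isometries cannot mix distinct distance scales, combined with the fact that $\mu$-compatibility promotes the abstract notion of distinct points to positive-measure events that double ergodicity can act on. The only small care needed is to confirm that two points at positive distance exist, which follows from nonatomicity of $\mu$.
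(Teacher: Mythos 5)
Your proof is correct and follows essentially the same route as the paper: use $\mu$-compatibility to produce positive-measure sets at definite distance, then apply double ergodicity to pull a small ball back onto both of them, contradicting the isometry. The only cosmetic difference is that you take explicit balls around two points and two explicit witnesses, while the paper phrases the contradiction via the diameter of the image of a single $\ve/2$-ball.
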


\begin{proof}
	Suppose not and let $d$ be a $\mu$-compatible isometric metric. Choose an $\ve > 0$ such that there exists sets $A,B \subset X$ with $d(A,B) > \ve$. Since $d$ is $\mu$-compatible $\mu(B_{\ve/2}(x)) > 0$. By double ergodicity there exists an $n$ such that
	$$\begin{array}{cc}
		\mu(T^{-n}(B_{\ve/2}(x)) \cap A) > 0&\mu(T^{-n}(B_{\ve/2}(x)) \cap B) > 0.
\end{array}$$
	This implies that the diameter of $T^n(B_{\ve/2}(x))$ is strictly greater than $\ve$ contradicting the fact that $T$ is an isometry for $d$.
\end{proof}

\begin{corollary}
	\label{cor:totally_bounded}
Let $T$ be doubly ergodic and $d$ be a totally bounded $\mu$-compatible metric.  Then $T$ is Li-Yorke M-sensitive with respect to $d$.
	 \end{corollary}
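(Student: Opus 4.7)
The plan is to establish the two defining properties of a Li-Yorke pair separately and then combine them by Fubini. For the $\liminf = 0$ part, I would apply Proposition \ref{prop:doubly_ergodic} with $r = 0$. Note that $0 \in H$ trivially, since for any ball $B$ one may pick any $x' \in B$ and take $y' = x'$, giving $d(x', y') = 0$. An inspection of the proof of Proposition \ref{prop:doubly_ergodic} shows that the only place compactness of $(X, d)$ is used is to cover $X$ with finitely many $\ve/2$-balls; this is exactly total boundedness, so the argument carries over verbatim. Thus for every $x \in X$ and a.e. $y \in X$ there is a subsequence $\{n_k\}$ with $d(T^{n_k} x, T^{n_k} y) \to 0$, and in particular $\liminf_{n\to\infty} d(T^n x, T^n y) = 0$.

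For the $\limsup > 0$ part, I would invoke Theorem \ref{prop:wmsclass}. Since double ergodicity implies conservative ergodicity, the theorem applies and presents two alternatives: either $T$ is W-measurably sensitive, or $T$ is isomorphic mod $0$ to an invertible minimal uniformly rigid isometry for some $\mu$-compatible metric $d'$. Double ergodicity is a purely measure-theoretic condition and so transfers along any isomorphism mod $0$, so in the second case the isomorphic copy would be a doubly ergodic system that is an isometry for a $\mu$-compatible metric, directly contradicting Lemma \ref{lem:denoiso}. Therefore $T$ is W-measurably sensitive, and by definition this is with respect to every $\mu$-compatible metric, in particular the given $d$: there exists $\delta > 0$ such that for every $x \in X$, $\limsup_{n\to\infty} d(T^n x, T^n y) > \delta$ for a.e. $y$.

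Combining the two statements, for every $x \in X$ the set of $y$ for which $(x,y)$ is a Li-Yorke pair (i.e. $\liminf = 0$ and $\limsup > \delta$) has full measure. By Fubini this set of pairs has full measure in $X \times X$, which is precisely Li-Yorke M-sensitivity of $T$ with respect to $d$.

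The main subtlety I anticipate is verifying that the compactness hypothesis in Proposition \ref{prop:doubly_ergodic} can be relaxed to total boundedness; once this reduction is explicit, the rest is a direct assembly of Theorem \ref{prop:wmsclass}, Lemma \ref{lem:denoiso}, and a Fubini argument.
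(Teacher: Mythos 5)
Your proof is correct, and its overall architecture matches the paper's: both halves use the same ingredients, namely Theorem~\ref{prop:wmsclass} together with Lemma~\ref{lem:denoiso} to rule out the isometry alternative and get W-measurable sensitivity (the $\limsup$ half), and Proposition~\ref{prop:doubly_ergodic} with $r=0$ to get proximality (the $\liminf$ half), with a Fubini-type combination that the paper leaves implicit. The one genuine divergence is how you bridge the gap between the totally bounded hypothesis here and the compactness hypothesis of Proposition~\ref{prop:doubly_ergodic}: you reopen the proof of that proposition and observe that compactness enters only through the finite cover by $\ve/2$-balls, so total boundedness suffices and the proposition can be applied directly to $(X,d)$; your check of this is accurate, since the remaining steps use only $k$-ergodicity, $\mu$-compatibility of the balls, and the membership $0\in H$ (witnessed by $y'=x'$ in the same ball). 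The paper instead keeps Proposition~\ref{prop:doubly_ergodic} as a black box with its stated compactness hypothesis and passes to the topological completion $(X_1,d_1)$, which is compact by total boundedness, extending $\mu$ and letting $T_1$ be the identity off $X$; this costs the verification that the completed system is isomorphic mod $0$ (hence still doubly ergodic) and that proximality in $d_1$ pulls back to $d$, but it avoids restating or re-proving the proposition under a weaker hypothesis. Your route is slightly more economical; the paper's is more modular. Either way the conclusion of Corollary~\ref{cor:totally_bounded} follows.
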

\begin{proof}
By Proposition 6.3 \cite{G-S08} we know that $T$ is either W-measurably sensitive or is measurably isomorphic to an isometry. Since it is doubly ergodic the preceding lemma shows that it cannot be isomorphic to an isometry and hence is W-measurably sensitive. Thus $T$ separates points.  

We now show that $T$ brings points together.  Suppose it did not.  Proceeding as in the proof of Proposition \ref{prop:wmsclass} (Theorem 7.1 in \cite{G-S08}), let $(X_1,d_1)$ be the topological completion of $(X,d)$.  We can extend the measure to this space by letting sets $S\subset X_1$ be measurable when $S\cap X$ is measurable and by defining $\mu_1(S)=\mu(S\cap X)$.  Define $T_1$ on $X_1$ as $T$ on $X$ and as the identity on $X_1\backslash X$.  This new system is isomorphic mod 0 to $(X,\mu,T)$, so $T$ is doubly ergodic.  Since $(X,d)$ is totally bounded, $(X_1,d_1)$ is compact.  Then by Proposition \ref{prop:doubly_ergodic}, $T_1$ brings points together in the metric $d_1$.  It follows that $T$ brings points together in the metric $d$.
\end{proof}

\section{Li-Yorke M-sensitivity and Weak Mixing} 
\label{sec:li_yorke_sensitivity_and_weak_mixing}


We now show that Li-Yorke sensitivity implies weak mixing, which will be a converse to the above results in the finite measure-preserving case. We first show the useful fact that every positive measure subset of a standard Borel space admits a $\mu$-compatible metric, which in fact turns out to be totally bounded.  We start with a lemma whose proof is standard.

\begin{lemma}\label{lem:sigma_finite}
	Given a measure space $(X,\mathcal{S},\mu)$ with a $\sigma$-finite measure $\mu$ there exists an equivalent probability measure $\eta$.
\end{lemma}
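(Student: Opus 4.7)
The plan is to construct $\eta$ as a weighted sum that absolutely converges and agrees with $\mu$ on null sets. First, I would use $\sigma$-finiteness to write $X=\bigsqcup_{n\ge 1} X_n$ as a disjoint union of measurable sets with $0<\mu(X_n)<\infty$. (If some $X_n$ in a given decomposition have measure zero, absorb them into one of positive measure; since $\mu$ is nonzero on some piece by nonatomicity and nontriviality, this is harmless. If only finitely many pieces have positive measure, one can split a finite-measure piece into countably many positive-measure pieces using nonatomicity, but this is not even needed for the construction below.)

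Next, I would define a density
\begin{equation*}
f(x) \;=\; \sum_{n=1}^{\infty} \frac{1}{2^{n}\,\mu(X_n)} \, \mathbf{1}_{X_n}(x),
\end{equation*}
which is a strictly positive, measurable function on $X$. Setting
\begin{equation*}
\eta(A) \;=\; \int_A f \, d\mu \;=\; \sum_{n=1}^{\infty} \frac{\mu(A\cap X_n)}{2^{n}\,\mu(X_n)}
\end{equation*}
gives a measure on $\mathcal{S}$ (countable additivity follows from the monotone convergence theorem applied to the partial sums, or directly from Tonelli for counting measure cross $\mu$). Taking $A=X$ yields $\eta(X)=\sum_n 2^{-n}=1$, so $\eta$ is a probability measure.

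Finally, I would verify equivalence of $\eta$ and $\mu$. If $\mu(A)=0$, then $\mu(A\cap X_n)=0$ for every $n$, so each term in the sum defining $\eta(A)$ vanishes and $\eta(A)=0$. Conversely, if $\eta(A)=0$, then since every summand is nonnegative we must have $\mu(A\cap X_n)=0$ for all $n$; summing over $n$ and using that $\{X_n\}$ partitions $X$ gives $\mu(A)=\sum_n \mu(A\cap X_n)=0$.

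There is no real obstacle here; the only mild subtlety is arranging a $\sigma$-finite decomposition into pieces of \emph{strictly positive} finite measure so that the denominators $\mu(X_n)$ make sense, which is handled by merging zero-measure remainders into a single positive-measure piece (or equivalently using the variant $\eta(A)=\sum_n 2^{-n}\mu(A\cap X_n)/(1+\mu(X_n))$, after rescaling, which avoids the issue entirely).
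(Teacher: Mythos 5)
Your proof is correct and is precisely the standard density argument (take a partition into finite positive-measure pieces and set $\eta(A)=\sum_n 2^{-n}\mu(A\cap X_n)/\mu(X_n)$); the paper itself omits the proof, remarking only that it is standard. Your handling of the zero-measure pieces and the equivalence check are both fine, so nothing is missing.
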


\begin{proposition}
	\label{prop:compatible_metric}
	Let $(X,\mathcal{B},\mu )$ be a standard Borel space with nonatomic $\sigma$-finite measure $\mu$. Then there exists a totally bounded $\mu$-compatible metric $d$ on $X$.
\end{proposition}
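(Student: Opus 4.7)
The plan is to exhibit a Borel injection $\phi:X\to[0,1]$ whose pushforward agrees with Lebesgue measure $\lambda$ up to null sets, and then pull back the Euclidean metric on $[0,1]$: the pullback is bounded by $1$ and totally bounded automatically, while $\mu$-compatibility follows because Lebesgue measure assigns positive mass to every nondegenerate interval.

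First, I would invoke Lemma \ref{lem:sigma_finite} to replace $\mu$ by an equivalent probability measure $\eta$; a Borel metric is $\mu$-compatible iff it is $\eta$-compatible, since equivalent measures share null sets, so this reduction is harmless. Applied to the nonatomic standard Borel probability space $(X,\mathcal{B},\eta)$, the Lebesgue--Rokhlin isomorphism theorem supplies full-measure Borel sets $X_0\subset X$ and $I_0\subset [0,1]$ together with a Borel bijection $\phi_0:X_0\to I_0$ satisfying $(\phi_0)_*(\eta|_{X_0})=\lambda|_{I_0}$.

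Next I would extend $\phi_0$ to a Borel injection $\phi:X\to[0,1]$ defined on all of $X$. Both complements $X\setminus X_0$ and $[0,1]\setminus I_0$ are Borel, hence standard Borel. To guarantee that a Borel bijection between them exists, I would first remove an uncountable Borel null subset (e.g., a Cantor-like set) from each of $X_0$ and $I_0$, enlarging each complement into an uncountable standard Borel space; the Borel isomorphism theorem then provides a Borel bijection between the new complements, and piecing it together with $\phi_0$ yields the desired global Borel bijection $\phi:X\to[0,1]$. Setting $d(x,y)=|\phi(x)-\phi(y)|$ produces a Borel metric bounded by $1$, and total boundedness is inherited from $[0,1]$ via the isometric embedding $\phi$.

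For $\mu$-compatibility, for each $x\in X$ and $r>0$ the ball $B_r^d(x)=\phi^{-1}(B_r(\phi(x)))$, and
\[
\mu(B_r^d(x))\;\geq\;\mu\bigl(\phi^{-1}(B_r(\phi(x))\cap I_0)\bigr)\;=\;\lambda\bigl(B_r(\phi(x))\cap I_0\bigr)\;>\;0,
\]
since $I_0$ has full Lebesgue measure in $[0,1]$ and $B_r(\phi(x))$ is a nondegenerate interval. The main obstacle in this plan is the bookkeeping in the extension step: promoting the Lebesgue--Rokhlin isomorphism (which is only defined mod $0$) to a genuine Borel injection on all of $X$ requires matching the Borel cardinalities of the two complementary null sets, which is the reason for the Cantor-set swap. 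Once this is handled, total boundedness and $\mu$-compatibility come for free from the fact that $\phi$ realizes $(X,d)$ as a subspace of $([0,1],|\cdot|)$ whose pushforward measure has full support.
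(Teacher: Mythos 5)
Your core idea is the same as the paper's: reduce to an equivalent probability measure $\eta$ via Lemma \ref{lem:sigma_finite}, transport $\eta$ to Lebesgue measure on $[0,1]$ by a Borel map, and pull back the Euclidean metric, getting total boundedness for free and $\mu$-compatibility from the fact that nondegenerate intervals have positive Lebesgue measure. The difference is that the paper invokes the measure isomorphism theorem in the form of \cite[Theorem 17.41]{Ke95}, which already produces an \emph{everywhere-defined} Borel isomorphism $f:X\to[0,1]$ with $f_*\eta=\lambda$, so no mod~$0$ patching is needed; you instead start from the mod~$0$ (Lebesgue--Rokhlin) version and upgrade it by hand.

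That upgrade step is where your write-up has a genuine, though easily repaired, slip. If you remove an uncountable Borel null set $N_X$ from $X_0$ and an independently chosen uncountable Borel null set $N_I$ from $I_0$, then the map you use on the main piece is $\phi_0|_{X_0\setminus N_X}$, whose image is $I_0\setminus\phi_0(N_X)$, not $I_0\setminus N_I$; gluing this with a Borel bijection from $(X\setminus X_0)\cup N_X$ onto $([0,1]\setminus I_0)\cup N_I$ therefore fails to be injective (the images overlap on $N_I\setminus\phi_0(N_X)$) and fails to be well-matched unless $N_I\subset\phi_0(N_X)$ -- and injectivity is exactly what you need for $d(x,y)=|\phi(x)-\phi(y)|$ to be a metric rather than a pseudometric. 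The fix is one line: choose a single uncountable Borel $\lambda$-null set $M\subset I_0$ (or null $N\subset X_0$) and remove the \emph{corresponding} sets $N=\phi_0^{-1}(M)$ and $M=\phi_0(N)$ from the two sides, so that $\phi_0$ restricts to a bijection $X_0\setminus N\to I_0\setminus M$ while both complements become uncountable standard Borel spaces; note also that the pushforward identity you use in the compatibility estimate is for $\eta$, not $\mu$, though equivalence of the measures makes the positivity conclusion transfer. With these adjustments your argument is correct, but it amounts to reproving the stronger isomorphism theorem the paper simply cites.
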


\begin{proof}\label{pf1:}
	By Lemma \ref{lem:sigma_finite} there exists an equivalent probability measure $\eta$. Now using the isomorphism theorem for measures (\cite{Ke95}, Theorem $17.41$), there is a Borel isomorphism $f:X \to [0,1]$ with $f \eta = m$ where $m$ is the Lebesgue measure on $[0,1]$. Define a metric $d$ on $X$ by 
	\begin{equation*}
		d(x,y) = |f(x) - f(y)|.
	\end{equation*}
	Note that this defines a totally bounded metric. Now we show it is $\mu$-compatible. Given $x \in X$ and $\ve > 0$ consider $B_{\ve}(x)$. The set $f(B_{\ve}(x))$ is an open ball of radius $\ve$ about $f(x)$ on the interval $[0,1]$ and thus has positive Lebesgue measure. As $m(f(B_{\ve})(x)) > 0$, we have that $\eta(B_{\ve}(x)) > 0$, so that $\mu(B_{\ve}(x)) > 0$. Hence $d$ is $\mu$-compatible.
\end{proof}

\begin{corollary}\label{cor:compatible_metric}
	Let $(X,\mathcal{B},\mu )$ be a standard Borel space with nonatomic $\sigma$-finite measure $\mu$ and $Y \in \mathcal{B}$ be a positive measure subset of $X$. Then there exists a totally bounded $\mu$-compatible metric $d$ on $Y$.
\end{corollary}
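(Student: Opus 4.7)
The plan is to reduce the corollary immediately to Proposition \ref{prop:compatible_metric} applied to $Y$ in place of $X$. The key point is that a Borel subset $Y$ of a standard Borel space $(X,\mathcal{B})$ is itself a standard Borel space when equipped with the trace $\sigma$-algebra $\mathcal{B}_Y := \{B \cap Y : B \in \mathcal{B}\}$; this is a standard fact of descriptive set theory (see, for example, \cite{Ke95}). I would equip $Y$ with the restricted measure $\mu_Y(A) := \mu(A)$ for $A \in \mathcal{B}_Y$.

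Next I would verify that $\mu_Y$ inherits the hypotheses needed to invoke the proposition. Nonatomicity is immediate, since an atom of $\mu_Y$ would be a positive-measure Borel subset of $Y$ with no smaller positive-measure Borel subset, and every Borel subset of $Y$ is a Borel subset of $X$, contradicting nonatomicity of $\mu$. For $\sigma$-finiteness, write $X = \bigcup_n X_n$ with $\mu(X_n) < \infty$ and observe that $Y = \bigcup_n (X_n \cap Y)$ is then a countable union of sets of finite $\mu_Y$-measure.

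Having checked the hypotheses, I would apply Proposition \ref{prop:compatible_metric} to $(Y,\mathcal{B}_Y,\mu_Y)$ to obtain a totally bounded metric $d$ on $Y$ which is $\mu_Y$-compatible, i.e., for which every nonempty open ball has positive $\mu_Y$-measure. Since $\mu_Y$ and $\mu$ agree on Borel subsets of $Y$, this $d$ is $\mu$-compatible in the sense of the corollary.

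There is essentially no obstacle in this argument; the entire substance of the result is the descriptive-set-theoretic fact that Borel subsets of standard Borel spaces are themselves standard Borel, together with the preservation of nonatomicity and $\sigma$-finiteness under restriction to a measurable subset.
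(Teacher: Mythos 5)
Your proposal is correct and matches the paper's proof: the paper likewise cites the fact (Kechris, Corollary 13.4) that a Borel subset of a standard Borel space is standard Borel, notes the restricted measure is nonatomic and $\sigma$-finite, and applies Proposition \ref{prop:compatible_metric}. Your write-up simply spells out the routine verifications more explicitly.
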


\begin{proof}\label{pf2:}
	By  \cite[Corollary 13.4]{Ke95}) we know that $Y$ is   a standard Borel space and the restriction of $\mu$ to $Y$ is  a nonatomic nontrivial $\sigma$-finite measure.
	\end{proof}

We now show the main result of this section.

\begin{proposition}\label{prop:weak_mixing}Let $(X,\mu,T)$ be a conservative ergodic nonsingular dynamical system. If $T$ is Li-Yorke M-sensitive, then it is weakly mixing.

\end{proposition}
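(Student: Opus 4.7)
The plan is to argue by contrapositive: assume $T$ is conservative ergodic and not weakly mixing, and construct a single $\mu$-compatible metric $d$ on $X$ for which a positive-measure set of pairs $(x,y)$ fails to be Li-Yorke, contradicting Li-Yorke M-sensitivity.

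Failure of weak mixing supplies a non-constant $f\in L^\infty$ and $z\in\C$ with $f\circ T=zf$. A standard argument using the ergodicity dichotomy applied to level sets of $|f|$---each of which is positively invariant under $T$ (for one direction of the inequality depending on whether $|z|>1$, $|z|<1$, or $|z|=1$), hence invariant by conservativity---forces $|z|=1$ and $|f|$ essentially constant. Normalize so $|f|=1$ a.e.; note $z\neq 1$, else $f$ would be $T$-invariant and thus constant by ergodicity. The crucial identity
\[
|f(T^n x)-f(T^n y)| \;=\; |z|^n\,|f(x)-f(y)| \;=\; |f(x)-f(y)|
\]
shows that any $\mu$-compatible metric $d$ dominating $(x,y)\mapsto|f(x)-f(y)|$ satisfies $\liminf_n d(T^n x,T^n y)\geq|f(x)-f(y)|>0$ on the positive-measure set $\{(x,y):f(x)\neq f(y)\}$, and such pairs are not Li-Yorke.

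To build such a $d$, I would invoke the descriptive-set-theoretic refinement theorem (e.g.\ Kechris, \emph{Classical Descriptive Set Theory}, Theorem 13.11) to install a Polish topology $\tau$ on $X$ generating the same Borel $\sigma$-algebra, in which $f$ is continuous, and then pass to $\mathrm{supp}(\mu)$ in $\tau$---a closed, Polish, full-measure subset on which $\mu$ has full support. On this model any compatible metric $d_0$ bounded by $1$ is automatically $\mu$-compatible. Define
\[
d(x,y) \;:=\; d_0(x,y) + |f(x)-f(y)|.
\]
Since $f$ is $\tau$-continuous, $d$ and $d_0$ generate the same topology, so every nonempty open $d$-ball contains a nonempty open $d_0$-ball and therefore has positive $\mu$-measure; hence $d$ is genuinely $\mu$-compatible. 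Lemma \ref{prop:extendmetric} transports $d$ to a $\mu$-compatible metric on the original $X$, and Proposition \ref{prop1:} (isomorphism invariance of Li-Yorke M-sensitivity) propagates the contradiction back.

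The hard part will be ensuring $\mu$-compatibility at \emph{every} point of $X$, not merely almost every point, since the definition demands positive measure for \emph{all} nonempty open $d$-balls. A direct Lusin- or Lebesgue-density-type modification of $f$ only delivers the a.e.\ version of compatibility and so is insufficient. The retopologization step is precisely what bridges this gap: by replacing the measurable eigenfunction with a genuinely continuous version before adding $|f(x)-f(y)|$ to $d_0$, one preserves $\mu$-compatibility rather than destroying it.
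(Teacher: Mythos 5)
Your argument is correct, and it shares the paper's overall strategy (contrapositive, take a nonconstant eigenfunction $f$ with $f\circ T=zf$, observe that the induced rotation is an isometry so that eigenfunction-separated pairs can never become proximal, then build a $\mu$-compatible metric that detects this), but the way you manufacture the metric is genuinely different. The paper stays entirely measure-theoretic: it pushes $f$ to a circle rotation, partitions $X$ into the finitely many positive-measure preimages $E_1,\dots,E_k$ of arcs of length $1/m$, puts a $\mu$-compatible metric on each piece via Corollary~\ref{cor:compatible_metric}, declares cross-piece distance $1$, and notes that pairs with arc-distance at least $1/m$ stay in different pieces forever, so their distance is identically $1$. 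You instead make $f$ itself continuous by a Polish retopologization (Kechris 13.11), pass to $\mathrm{supp}(\mu)$, and add the pullback pseudometric $|f(x)-f(y)|$ to a compatible metric; this buys the cleaner quantitative statement $\liminf_n d(T^nx,T^ny)\geq|f(x)-f(y)|>0$ on the whole positive-measure set $\{f(x)\neq f(y)\}$, at the price of descriptive-set-theoretic machinery that the paper avoids (its only such input is the measure isomorphism theorem already used for Corollary~\ref{cor:compatible_metric}). Your construction is sound: the topology of $d_0+|f(x)-f(y)|$ agrees with $\tau$ on $\mathrm{supp}(\mu)$, so compatibility holds at every point, and Lemma~\ref{prop:extendmetric} then extends to $X$. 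The remaining bookkeeping, which you correctly delegate to the mechanism of Proposition~\ref{prop1:}, should be made explicit in a full write-up: the extension only agrees with your metric on a full-measure subset $X_0$ of $U\times U$, and the identity $f\circ T^n=z^nf$ holds only a.e., so you must restrict to $\{f(x)\neq f(y)\}\cap\bigcap_{n\geq 0}(T\times T)^{-n}(X_0\cap(F\times F))$ (with $F$ the full-measure set where the eigenvalue equation propagates along orbits), which still has positive measure since $T\times T$ is nonsingular and $\{f(x)\neq f(y)\}$ has positive measure by Fubini because $f$ is nonconstant; also rescale $d_0+|f(x)-f(y)|$ to respect the paper's convention that metrics are bounded by $1$. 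None of these are gaps, just details to include; the observation that $z\neq 1$ is not actually needed, since only $|z|=1$ enters the estimate.
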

\begin{proof}
	
	We prove the contrapositive. Suppose $T$ is not weakly mixing, so that it has a nonconstant  eigenfunction $f:X\to S^1$.  This is a factor map mapping $(X,\mu,T)$ to $(S^1,\nu,R)$ where $\nu$ is some measure on the circle and $R$ is a rotation.  Given $z,z' \in S^1$ let $a(z,z')$ denote the normalized arc length on the circle (so $a(1,-1) = \frac{1}{2}$). Since the map $f$ is nontrivial and measure-preserving there exists an $m$ such that the set $A \subset X^2$ defined by	
	\begin{equation*}
		A = \left \{ (x,y) \in X \times X : a(f(x),f(y)) \geq 1/m\right \}
	\end{equation*}
\noindent	has positive measure. Let $D_i = f^{-1}([e^{\frac{2 \pi (i-1)}{m}},e^{\frac{2 \pi i}{m}}))$ for $i = 1,\ldots,m$. Relabel the $D_i$ as
	\begin{equation*}
		\{E_1,\ldots,E_k,\ldots,E_m\},
	\end{equation*}
	
\noindent	where $\mu(E_j) = 0$ for $j > k$. Then we have
	\begin{equation*}
		X = (E_1 \sqcup \ldots \sqcup E_k)\mod 0.
	\end{equation*}
	By removing from $X$ the backwards orbits of $E_j$ for $j > k$, a measure zero set, we obtain a $T$-invariant set of full measure. Thus since Li-Yorke M-sensitivity is preserved under measurable isomorphism we can assume that $X =E_1 \sqcup \ldots \sqcup E_k $. By Corollary \ref{cor:compatible_metric} we know that for each $j \leq k$ there exists a metric $d_j$ on $E_j$ which is $\mu$-compatible. Define a metric $d$ on $X$ by
	\begin{equation*}
	d(x,y) = \begin{cases}
			1 & \text{if } x \in E_i, y \in E_j, i \neq j\\
			d_i(x,y) & \text{if } x,y \in E_i.
		\end{cases}		
	\end{equation*}	
	Note that $d$ is a $\mu$-compatible metric for $X$.  As $A$ has positive measure in the product space, showing that no $(x,y)\in A$ is proximal suffices to show that $T$ is not Li-Yorke M-sensitive.  Note that since $R$ is a rotation it is an isometry under arc length. Then if $(x,y) \in A$,
	\begin{equation*}
		a(f(T^n x), f(T^n y)) = a(R^n f(x),R^n f(y)) = a(f(x),f(y)) \geq 1/m.
	\end{equation*}	
	This implies that $T^nx$ and $T^ny$ cannot lie in the same $E_i$, for otherwise we would have that $a(f(x),f(y)) < 1/m$. Thus $d(T^n x ,T^n y) = 1$ for all $n \geq 0$, which implies $(x,y)$ is not proximal.
\end{proof}

Collecting our results we see that we have the following chain of implications.

\begin{theorem}\label{thm:}
	Let $(X,\mu,T)$ be a conservative ergodic nonsingular dynamical system.  Then	
	\begin{equation*}
		T \times T \text{ ergodic } \Rightarrow T \text{ Li-Yorke M-sensitive } \Rightarrow T \text{ weakly mixing}.
	\end{equation*}
\end{theorem}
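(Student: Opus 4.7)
The plan is to assemble the theorem directly from the two main results proved in Sections~\ref{S:mixLiYorke} and \ref{sec:li_yorke_sensitivity_and_weak_mixing}; the statement is essentially a compilation. I would prove the two implications separately.

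For the first implication, I would invoke the corollary to Proposition~\ref{prop:value}. The stated hypothesis there is $T\times T$ conservative ergodic, but this follows from our assumption that $T\times T$ is merely ergodic: on a non-atomic $\sigma$-finite nonsingular space an ergodic action is automatically conservative (a totally dissipative action fails to be ergodic since its Hopf decomposition produces nontrivial invariant unions of wandering sets). Once this is in place, for any $\mu$-compatible metric $d$ on $X$, Proposition~\ref{prop:value} guarantees that almost every pair $(x,y)\in X\times X$ has orbit $(T^n x,T^n y)$ dense in $X\times X$ in the product topology, so $\overline{G}\subset \mc{N}$. To extract Li-Yorke M-sensitivity from this I would note two things. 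First, the diagonal gives $0\in \overline{G}$, so a.e.\ pair satisfies $\liminf_{n\to\infty} d(T^nx,T^ny)=0$. Second, since $\mu$ is nonatomic there exist distinct points in $X$, so $\overline{G}$ contains some $r>0$, and then a.e.\ pair satisfies $\limsup_{n\to\infty} d(T^nx,T^ny)\ge r>0$. The intersection of these two full-measure sets is a full-measure set of Li-Yorke pairs, establishing the sensitivity for the given metric $d$; since $d$ was arbitrary, $T$ is Li-Yorke M-sensitive.

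The second implication is immediate from Proposition~\ref{prop:weak_mixing}, which delivers the desired conclusion verbatim under our standing hypothesis that $T$ is conservative ergodic and nonsingular. The only subtle step in the whole argument, and therefore the main obstacle, is justifying the upgrade from ``$T\times T$ ergodic'' to ``$T\times T$ conservative ergodic'' needed to apply Proposition~\ref{prop:value}; once that standard fact is confirmed, the theorem follows by composing the prior implications.
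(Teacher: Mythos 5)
Your assembly of the theorem is exactly the paper's: the paper's ``proof'' consists of collecting the corollary following Proposition~\ref{prop:value} for the first implication and Proposition~\ref{prop:weak_mixing} for the second, so the second half of your argument, and the way you extract Li--Yorke pairs from density of product orbits, are fine. The problem is the step you yourself flag as the main obstacle: the upgrade from ``$T\times T$ ergodic'' to ``$T\times T$ conservative ergodic.'' The justification you offer --- that on a nonatomic $\sigma$-finite space an ergodic nonsingular action is automatically conservative because total dissipativity yields nontrivial invariant unions of wandering sets --- is a fact about \emph{invertible} transformations. The paper works with nonsingular endomorphisms, and for a non-invertible map $S$ the set $\bigcup_{n\geq 0}S^{-n}W$ built from a wandering set $W$ is only decreasing under $S^{-1}$, not strictly invariant, so ergodicity (defined via $S^{-1}A=A$) does not exclude dissipativity. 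Concretely, take $X=[0,1)\times\Z$ with Lebesgue$\times$counting measure and $S(y,n)=(2y \bmod 1,\, n+1)$: this is measure-preserving, totally dissipative (each level $[0,1)\times\{m\}$ is wandering), yet ergodic, because a strictly invariant set has sections satisfying $A_n=D^{-k}A_{n+k}$ for the doubling map $D$, and exactness of $D$ forces triviality. So ``ergodic $\Rightarrow$ conservative'' is false in the generality in which you invoke it.

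This does not by itself falsify the theorem (that $S$ is neither conservative nor a Cartesian square), but it leaves a genuine gap in your first implication whenever $T$ is not invertible: you never use the standing hypothesis that $T$ is conservative ergodic, and your argument as written would equally ``prove'' that every ergodic nonsingular endomorphism is conservative, which the example refutes. If $T$ is invertible, then $T\times T$ is an invertible ergodic nonsingular transformation of a nonatomic space, your Hopf-decomposition argument is the classical fact, and the proof closes; alternatively one can read the first hypothesis as ``$T\times T$ conservative ergodic,'' which is the hypothesis actually used in Proposition~\ref{prop:value} and its corollary. To handle non-invertible $T$ under the bare hypothesis ``$T\times T$ ergodic'' you would need a further argument (for instance, deriving conservativity of $T\times T$ from conservativity of $T$ together with ergodicity of $T\times T$), which your proposal does not supply --- and which, it should be said, the paper's one-line proof elides as well; you correctly identified the delicate point, but the justification given does not cover the class of systems the paper allows.
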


In the finite measure-preserving case it is known that weak mixing and ergodicity of the product are equivalent.  We thus have the following result.

\begin{theorem}\label{thm:weak_mixing}
	Suppose $(X,T,\mu)$ is a finite measure-preserving ergodic dynamical system. Then $T$ is Li-Yorke M-sensitive if and only if it is weakly mixing.
\end{theorem}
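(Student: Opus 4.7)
The plan is to close the loop of implications established in Theorem \ref{thm:} by invoking the classical equivalence, valid in the finite measure-preserving setting, between weak mixing of $T$ and ergodicity of the Cartesian square $T\times T$ (with respect to $\mu\times\mu$). This equivalence fails in general for nonsingular or infinite measure-preserving systems (as noted in the introduction, citing \cite{ALW79}), so this is precisely the input that makes the finite measure-preserving case degenerate to a single equivalence class.

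First I would dispatch the forward direction. Assuming $T$ is Li-Yorke M-sensitive, I can directly quote Proposition \ref{prop:weak_mixing} (which is established for arbitrary conservative ergodic nonsingular systems, and hence applies a fortiori in the finite measure-preserving ergodic case) to conclude that $T$ is weakly mixing. No adaptation of the proof is needed since finite measure-preserving implies conservative.

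For the reverse direction, I would assume $T$ is weakly mixing. Since $(X,\mu,T)$ is finite measure-preserving and ergodic, I invoke the classical theorem that weak mixing is equivalent to $T\times T$ being ergodic with respect to $\mu\times\mu$. Having produced ergodicity of the Cartesian square, I then apply the first implication of Theorem \ref{thm:} (established via Corollary following Proposition \ref{prop:value}, which uses $T\times T$ ergodic to force topological density of pair orbits and hence Li-Yorke behavior for every value in $\overline{G}$, including $0$). This yields Li-Yorke M-sensitivity of $T$, closing the chain.

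The only non-routine ingredient is the classical equivalence in the finite measure-preserving case, and this is not really an obstacle since it is a standard result. The structural point worth noting is that the whole argument is essentially a one-line combination: the chain of implications in Theorem \ref{thm:} becomes a cycle once the extra hypothesis collapses the outer two conditions, making weak mixing, Li-Yorke M-sensitivity, and $T\times T$ ergodicity mutually equivalent.
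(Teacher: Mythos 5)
Your proposal is correct and is essentially the paper's own argument: the paper likewise obtains the theorem by combining Theorem \ref{thm:} (Li-Yorke M-sensitive $\Rightarrow$ weak mixing, and $T\times T$ ergodic $\Rightarrow$ Li-Yorke M-sensitive) with the classical equivalence of weak mixing and ergodicity of $T\times T$ for finite measure-preserving systems. Your added remarks (finite measure-preserving implies conservative, so Proposition \ref{prop:weak_mixing} applies) are accurate and just make explicit what the paper leaves implicit.
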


Note that in this case $T$ is not just Li-Yorke M-sensitive but in fact separates points arbitrarily close to every possible distance that any given $\mu$-compatible metric assumes.  We now show the following lemma, which will allow us to relate this result to the classification result on W-measurable sensitivity (Theorem \ref{prop:wmsclass}).

\begin{lemma}\label{lem1:}
	Suppose $(X,\mu,T)$ is an ergodic finite measure-preserving transformation. Then $T$ is weakly mixing if and only if $T$ has no nontrivial factors which admit a $\mu$-compatible metric for which they are an isometry.
\end{lemma}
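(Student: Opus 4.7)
The plan is to prove the two implications separately.

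For the forward direction I argue by contradiction. Suppose $T$ is weakly mixing yet $(Y,\nu,S)$ is a nontrivial factor of $(X,\mu,T)$ admitting a $\nu$-compatible metric for which $S$ is an isometry. Because $(X,\mu,T)$ is ergodic and finite measure-preserving, weak mixing of $T$ is equivalent to ergodicity of $T\times T$; since $S\times S$ is then a factor of $T\times T$ via the product factor map, $S\times S$ is ergodic, so $S$ is doubly ergodic. This directly contradicts Lemma \ref{lem:denoiso}, which forbids a doubly ergodic transformation from being an isometry for a compatible metric.

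For the converse I prove the contrapositive: if $T$ is not weakly mixing I construct a nontrivial isometric factor. By definition there is a non-constant $L^\infty$ eigenfunction $f$ satisfying $f\circ T=\lambda f$ for some $\lambda\in S^1$. Ergodicity of $T$ forces $|f|$ to be a.e.\ constant, so after rescaling I may assume $f:X\to S^1$, and then $f$ is a factor map of $(X,\mu,T)$ onto $(S^1,\nu,R_\lambda)$, where $\nu=f_*\mu$ and $R_\lambda(z)=\lambda z$. The rotation $R_\lambda$ is an isometry of $S^1$ in the arc-length metric, which almost gives the desired factor.

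The one subtlety is that $\nu$ need not be supported on all of $S^1$ (for instance when $\lambda$ is a root of unity it is supported on a finite orbit), so arc length is not $\nu$-compatible on $S^1$ itself. I would handle this by passing to the closed $R_\lambda$-invariant subset $K=\operatorname{supp}(\nu)$ and working with the restricted arc-length metric $d_K$ on $K$. By the definition of topological support, every nonempty open $d_K$-ball in $K$ has positive $\nu$-measure, so $d_K$ is $\nu$-compatible, while $R_\lambda|_K$ is still an isometry. Since $f$ is non-constant, $K$ has more than one point and $(K,\nu,R_\lambda|_K)$ is a nontrivial isometric factor of $T$.

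I expect the main obstacle to be precisely this support issue in the converse; once one agrees to work on the topological support of the push-forward measure, the argument becomes routine. Everything else is standard, relying only on Lemma \ref{lem:denoiso} and the finite measure-preserving equivalence between weak mixing and ergodicity of the Cartesian square.
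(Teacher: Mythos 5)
Your proof is correct and follows essentially the same route as the paper: the forward direction via double ergodicity of all factors of a weakly mixing finite measure-preserving system combined with Lemma \ref{lem:denoiso}, and the converse via the rotation factor coming from a nonconstant eigenfunction, with arc length as the isometric metric. The only difference is cosmetic: where the paper splits into the irrational case (the factor measure is Lebesgue) and the rational case (a rotation on $n$ evenly spaced atoms), you restrict arc length to the topological support of the push-forward measure, which handles both cases uniformly and is equally valid.
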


\begin{proof}\label{pf3:}
	If $T$ is weak mixing, then all its factors are doubly ergodic and so by Lemma \ref{lem:denoiso} do not admit $\mu$-compatible isometric metrics.  If $T$ is not weak mixing, then it has a nontrivial rotation factor.  If the rotation is irrational, then the measure in the factor must be Lebesgue measure, for which arc length is an isometry.  Otherwise, the factor is a rotation on $n$ points.  Spacing these atoms evenly around the unit circle and using arc length gives an isometric metric.
\end{proof}

\begin{corollary}\label{cor:}
	Suppose $(X,\mu,T)$ is a finite measure-preserving ergodic dynamical system. Then $T$ is Li-Yorke M-sensitive if and only if no nontrivial factors of $T$ admit a $\mu$-compatible isometric metric.
\end{corollary}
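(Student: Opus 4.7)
The plan is to observe that this corollary is an immediate consequence of the two results that precede it. Specifically, Theorem \ref{thm:weak_mixing} already establishes that for a finite measure-preserving ergodic system, Li-Yorke M-sensitivity is equivalent to weak mixing, and Lemma \ref{lem1:} gives, in the same setting, the equivalence between weak mixing and the nonexistence of nontrivial factors admitting a $\mu$-compatible isometric metric. So the proof I would write is simply a chain of biconditionals: $T$ is Li-Yorke M-sensitive iff $T$ is weakly mixing (by Theorem \ref{thm:weak_mixing}) iff no nontrivial factor of $T$ admits a $\mu$-compatible isometric metric (by Lemma \ref{lem1:}).

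Because both of the ingredients are already stated and proved, there is essentially no obstacle to carrying this out; the only thing to be mindful of is that both results require the same hypothesis (finite measure-preserving and ergodic), which is precisely what is assumed in the corollary, so the two equivalences can be composed without any extra work. In particular, no new estimates, constructions of metrics, or appeals to the $T \times T$ ergodicity or Cartesian square machinery from earlier sections are needed here.

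Thus my proposed proof would be essentially one sentence: combine Theorem \ref{thm:weak_mixing} with Lemma \ref{lem1:}. If I wanted to make the reasoning slightly more transparent to the reader, I might spell out the two implications separately, noting that if $T$ is Li-Yorke M-sensitive then by Theorem \ref{thm:weak_mixing} it is weakly mixing, hence by Lemma \ref{lem1:} admits no nontrivial isometric factor; conversely, if $T$ has no such factor, Lemma \ref{lem1:} yields weak mixing, and Theorem \ref{thm:weak_mixing} then gives Li-Yorke M-sensitivity. No further argument is required.
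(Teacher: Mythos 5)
Your proposal is correct and matches the paper's intended argument: the corollary is stated immediately after Theorem \ref{thm:weak_mixing} and Lemma \ref{lem1:} precisely so that it follows by chaining the two equivalences, with no further work needed. Nothing is missing.
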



\section{Scrambled Sets}\label{S:scrambled}
The notion of a scrambled set in the topological case goes back to Li and Yorke \cite{LiY75}. Akin and Kolyada \cite{Ak02}   take the existence of an uncountable scrambled set as indicating that a system is chaotic.  Here we show that Li-Yorke M-sensitive systems always contain an uncountable scrambled set.

\begin{definition}\label{def:scram}
	A set $A \subset X$ is said to be scrambled if for all $x,y \in A$ with $x \neq y$ the pair $(x,y)$ is Li-Yorke.
\end{definition}

\begin{proposition} Let $A\subset X^2$ have positive measure and suppose there exists $D \in X$ with positive measure such that $A \subset D \times D$ and $A$ has full measure in $D \times D$. Then for almost every $x \in D$ there exists an uncountable set $B\subset X$ containing $x$ such that $y,z\in B$ with $y\neq z$ implies $(y,z)\in A$.\end{proposition}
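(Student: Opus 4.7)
My approach is a direct transfinite recursion of length $\omega_1$, exploiting that countable intersections of full-measure subsets of $D$ remain of full measure. This sidesteps the more elaborate Cantor-scheme arguments typically used for Mycielski-style results and requires no additional topological or metric structure on $X$.

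First, I will apply Fubini to the null set $(D \times D) \setminus A$ to produce the full-measure subset
\[
E = \{ x \in D : A^x \cap D \text{ and } A_x \cap D \text{ have full measure in } D \} \subseteq D,
\]
where $A^x = \{ y : (x,y) \in A \}$ and $A_x = \{ y : (y,x) \in A \}$ are the vertical and horizontal sections of $A$. I will then show that the conclusion of the proposition holds for every $x_0 \in E$.

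Fix $x_0 \in E$. By transfinite recursion, I will construct a sequence $(x_\alpha)_{\alpha < \omega_1}$ of pairwise distinct points in $E$ such that $(x_\beta, x_\alpha) \in A$ and $(x_\alpha, x_\beta) \in A$ for all $\beta < \alpha$. At stage $\alpha < \omega_1$, the set of admissible choices is
\[
E_\alpha := E \cap \bigcap_{\beta < \alpha} \bigl( A^{x_\beta} \cap A_{x_\beta} \bigr).
\]
Since $\alpha$ is countable and each constituent set has full measure in $D$, countable subadditivity of the null ideal gives that $E_\alpha$ has full measure in $D$. Since $\mu$ is nonatomic, removing the previously chosen countable set $\{ x_\beta : \beta < \alpha \}$ still leaves a full-measure (in particular nonempty) subset of $D$, from which to choose a new $x_\alpha$. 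The set $B = \{ x_\alpha : \alpha < \omega_1 \}$ then has cardinality $\aleph_1$, contains $x_0$, and for distinct $x_\alpha, x_\beta$ (say $\beta < \alpha$) satisfies $x_\alpha \in A^{x_\beta} \cap A_{x_\beta}$, giving the required $(x_\beta, x_\alpha), (x_\alpha, x_\beta) \in A$.

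There is no serious obstacle: the only points that need verification are that $E_\alpha$ has full measure in $D$ at each stage (immediate from countable subadditivity of the null ideal) and that countable subsets of a nonatomic positive-measure set can be discarded without losing positive measure (immediate from nonatomicity). Transfinite recursion along $\omega_1$ together with the axiom of choice suffices; no continuum hypothesis or further descriptive-set-theoretic machinery is required.
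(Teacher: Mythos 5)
Your proof is correct. The paper establishes the same statement via Zorn's lemma rather than transfinite recursion: it first reduces (without loss of generality) to the case $D=X$, lets $X_0$ be the set of points whose vertical sections $\{y:(x,y)\in A\}$ have full measure, takes a maximal ``$A$-scrambled'' subset $M\subset X_0$ containing a given point, and shows $M$ cannot be countable, since the countably many full-measure fiber sets attached to the points of $M$ intersect in a full-measure, hence nonempty, set, yielding a point that extends $M$ and contradicts maximality. Your recursion of length $\omega_1$ runs on exactly the same measure-theoretic engine --- countably many full-measure-in-$D$ constraints plus removal of a countable set of already-chosen points still leave a nonempty set of admissible choices --- but it is organized constructively instead of by a maximality contradiction, and it directly produces a scrambled set of cardinality $\aleph_1$ rather than merely ruling out countability. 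Two features of your version are worth noting: you track both the vertical and horizontal sections $A^x$ and $A_x$, which is the right thing to do for a general (not necessarily symmetric) $A$, since $(y,z)\in A$ and $(z,y)\in A$ are separate requirements; the paper's proof records only the vertical fibers, which suffices in its application because the Li-Yorke relation is symmetric, but your bookkeeping makes the general statement airtight. You also work directly inside $D$ via Fubini on the null set $(D\times D)\setminus A$ instead of first renormalizing to $D=X$. Both arguments use the axiom of choice in essentially equivalent ways, so neither is logically lighter; yours trades the maximality argument for explicit indexing by countable ordinals.
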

\begin{proof}
In this context we use the term scrambled set to refer to a set $C$ such that $y,z\in C$ and $y\neq z$ implies $(y,z)\in A$.  We can assume without loss of generality that $D^2 = X^2$ and that $A$ has full measure. Then a.e. $x\in X$ has a full-measure fiber, i.e. $\mu\{y\in X:(x,y)\in A\}=1$.  Let the set of such $x$ be denoted $X_0$.  For any $x,y\in X_0$ let $P$ be the collection of scrambled sets contained in $X_0$ and containing $\{x,y\}$.  This is nonempty as $\{x,y\}\in P$.  Let $P$ be partially ordered by set inclusion and take $Q$ to be any totally ordered subset.  Then $\bigcup_{S \in Q}S$ is contained in $P$ and is an upper bound for $Q$.  Thus every totally ordered subset of $P$ has an upper bound. Now Zorn's Lemma guarantees the existence of an element $M$ such that $M\in P$ and $M$ is not contained in any other element of $P$.  We show $M$ is uncountable.  Suppose it were not.  Then for $z\in A$ let $C(z)=\{w\in X:(z,w)\text{ is Li-Yorke}\}$.  We have $\mu(C(z)^c)=0$ so that $\mu((\bigcap_{z\in A}C(z))^c)=0$.  In particular $\bigcap_{z\in A}C(z)$ is nonempty and so contains some $w$.  But then $M'=M\cup\{w\}$ is scrambled and $M\subsetneq M'$.
\end{proof}

\begin{corollary}
	\label{scrambled}
	Suppose $T$ is a Li-Yorke M-sensitive nonsingular transformation. Then a.e. $x$ is contained in an uncountable scrambled set.
\end{corollary}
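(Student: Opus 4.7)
The plan is to reduce the corollary to a single application of the preceding proposition by chopping $X$ into pieces of finite measure, on each of which the Li-Yorke pair set is essentially full.

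First I would fix a $\mu$-compatible metric $d$ on $X$ (any one — the statement must hold for every such metric, and Li-Yorke M-sensitivity guarantees the full-measure condition below for each). By hypothesis the set
\[
L = \{(x,y) \in X \times X : (x,y) \text{ is a Li-Yorke pair for } d\}
\]
has full measure in $X \times X$. Since $\mu$ is $\sigma$-finite and nonatomic, I can write $X$ as a disjoint union $X = \bigsqcup_{n \in \N} D_n$, where each $D_n$ has finite positive measure; then each product $D_n \times D_n$ has finite positive measure, and by Fubini applied to the $\mu$-null complement of $L$, the set $L \cap (D_n \times D_n)$ has full measure in $D_n \times D_n$.

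Next, for each $n$ I would apply the preceding proposition with $A_n = L \cap (D_n \times D_n)$ and $D = D_n$, whose hypotheses are exactly what the previous paragraph supplies. This yields a full-measure subset $F_n \subset D_n$ such that every $x \in F_n$ is contained in an uncountable set $B_x \subset D_n$ with the property that any two distinct $y, z \in B_x$ satisfy $(y,z) \in A_n \subset L$. In other words, $B_x$ is a scrambled set in $X$ (the notion of Li-Yorke pair depends only on the $T$-orbits in $X$, not on the restriction to $D_n$, so a scrambled set inside $D_n$ is automatically scrambled as a subset of $X$).

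Finally, taking $F = \bigsqcup_n F_n$, the set $F$ has full measure in $X$ and every $x \in F$ lies in an uncountable scrambled set, as desired. There is no real obstacle here beyond bookkeeping — the previous proposition does all the work of extracting an uncountable scrambled set via Zorn's lemma from a full-measure set of Li-Yorke pairs, and the present corollary is just the observation that $\sigma$-finiteness lets us feed that proposition the whole of $X$ one finite-measure piece at a time.
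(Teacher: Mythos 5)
Your argument is correct and is essentially the paper's (implicit) proof: for any fixed $\mu$-compatible metric the set $L$ of Li-Yorke pairs has full measure in $X\times X$, and the preceding proposition applied to it yields, for a.e.\ $x$, an uncountable set all of whose distinct pairs lie in $L$, i.e.\ an uncountable scrambled set. The only difference is your decomposition of $X$ into finite-measure pieces, which is harmless but unnecessary: the proposition only requires $D$ to have positive measure and $A$ to have full measure in $D\times D$, so one may simply take $D=X$ and $A=L$.
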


Thus a Li-Yorke M-sensitive transformation is also chaotic in the sense of \cite{Ak02} when viewed as a topological dynamical system under any $\mu$-compatible metric.  Uncountable scrambled sets play a smaller role here than they do in topological dynamics because uncountable sets can still be measure-theoretically small.  However, this idea will be important in proving that periodic transformations admit no $\mu$-compatible sensitive metrics.

\section{Weak Mixing and W-measurable sensitivity} 
\label{sec:weak_mixing_and_w_measurable_sensitivity}

Weak mixing implies W-measurable sensitivity in the finite measure-preserving case, as in this case weak mixing is equivalent to double ergodicity.  We do not know if this is true in the general nonsingular case. Here we study in more detail when they system is not W-measurably sensitive.  
 We start with the following lemma that proves a certain uniformity of the measures for $\mu$-compatible
 isometries.

\begin{lemma}\label{lem2:}
	Let $(X,\mu,T)$ be a conservative ergodic measure-preserving dynamical system and $d$ a $\mu$-compatible metric on $X$ for which $T$ is an invertible isometry.  Then the measure of balls $B_r(x)$ depends only on the radius.
\end{lemma}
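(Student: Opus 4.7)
The plan is to show that for each fixed $r>0$ the function
$g_r(x) := \mu(B_r(x))$ is measurable and $T$-invariant, so that ergodicity forces it to be constant almost everywhere.

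\medskip

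\noindent\textbf{Step 1: Measurability of $g_r$.} Since $d$ is assumed to be a Borel measurable metric on $X\times X$, the set $E_r := \{(x,y)\in X^2 : d(x,y)<r\}$ is Borel. Writing
\[
g_r(x) \;=\; \int_X \mathbf{1}_{E_r}(x,y)\, d\mu(y),
\]
Fubini's theorem (valid here since $\mu$ is $\sigma$-finite) shows that $g_r$ is Borel measurable.

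\medskip

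\noindent\textbf{Step 2: $T$-invariance.} Because $T$ is an isometry, for any $x,y\in X$ one has $d(Tx,Ty)=d(x,y)$, and since $T$ is invertible the same holds with $T^{-1}$ in place of $T$. Therefore
\[
y\in B_r(x) \iff d(Tx,Ty)<r \iff Ty\in B_r(Tx),
\]
which gives the set equality $T(B_r(x)) = B_r(Tx)$. Now $T$ is an invertible measure-preserving transformation, so $\mu(T(A))=\mu(A)$ for every Borel $A$; applying this with $A=B_r(x)$ yields
\[
g_r(Tx)\;=\;\mu(B_r(Tx))\;=\;\mu(T(B_r(x)))\;=\;\mu(B_r(x))\;=\;g_r(x).
\]
Hence $g_r\circ T = g_r$ pointwise.

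\medskip

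\noindent\textbf{Step 3: Ergodicity.} By ergodicity of $T$, every $T$-invariant measurable function is constant a.e., so there is a constant $c(r)\in[0,\mu(X)]$ with $g_r(x)=c(r)$ for $\mu$-a.e.\ $x$. This is precisely the statement that $\mu(B_r(x))$ depends only on $r$. If a single null set working simultaneously for all radii is desired, one applies the above to each rational $r>0$ (a countable family), intersecting the resulting full-measure sets, and then uses the monotonicity $r\mapsto g_r(x)$ to extend the conclusion to all real $r>0$.

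\medskip

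I expect no serious obstacle: the content is essentially that $T(B_r(x))=B_r(Tx)$ for an invertible isometry, together with ergodicity. The only mild care needed is the Fubini argument for measurability of $g_r$, which uses Borel-measurability of $d$ (explicitly standing assumption earlier in the paper) and $\sigma$-finiteness of $\mu$.
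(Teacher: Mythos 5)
Your route (show $g_r(x)=\mu(B_r(x))$ is $T$-invariant and invoke ergodicity) is a genuinely different argument from the paper's, which fixes a transitive point $x$ (almost every point is transitive), studies the monotone function $f(r)=\mu(B(x,r))$ and its continuity, and sandwiches an arbitrary ball via $T^n(B_{r-\ve}(x))\subset B_r(y)\subset T^n(B_{r+\ve}(x))$. The invariance-plus-ergodicity part of your proof is fine (though in Step 2 it is cleaner to write $T^{-1}(B_r(Tx))=\{y: d(Tx,Ty)<r\}=B_r(x)$ and use $\mu(T^{-1}A)=\mu(A)$ directly; then you never need measurability or measure of forward images, nor invertibility, for this step). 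The gap is in the conclusion of Step 3: ergodicity only yields $\mu(B_r(x))=c(r)$ for almost every center $x$, even after intersecting over rational radii, whereas the lemma as proved and as used in the paper is a statement about \emph{every} $x\in X$. In the Remark of Section 6 this lemma is fed into Lemma \ref{L:locfincompact}, whose hypothesis is that \emph{all} open balls of a given radius have equal measure, and whose proof picks balls $B(y_i,\ve)$ with no control over the centers; an a.e.\ statement does not plug in directly. So ``this is precisely the statement'' is where your argument falls short of what is needed.

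The gap is fixable, but the fix reintroduces a density-and-continuity argument in the spirit of the paper's proof. Since $d$ is $\mu$-compatible, your full-measure set $X'$ of good centers is dense (its complement is null, while every nonempty ball has positive measure). For arbitrary $x$, $r>0$, and $x'\in X'$ with $d(x,x')<\ve$, one has $B_{r-\ve}(x')\subset B_r(x)$, so $\mu(B_r(x))\ge c(r-\ve)$; letting $\ve\to0$ and using left-continuity of $c$ (which follows from $B_r(x')=\bigcup_{s<r}B_s(x')$, so $c(r)=\sup_{s<r}c(s)$, valid also when values are infinite) gives $\mu(B_r(x))\ge c(r)$. Conversely $B_{r-\ve}(x)\subset B_{r-\ve/2}(x')$ gives $\mu(B_{r-\ve}(x))\le c(r)$, and $\mu(B_r(x))=\lim_{\ve\to0}\mu(B_{r-\ve}(x))\le c(r)$. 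Hence $\mu(B_r(x))=c(r)$ for every $x$, which is the statement actually required. With that supplement your proof is complete and arguably more self-contained than the paper's, since it does not use the transitivity of a.e.\ point.
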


\begin{proof}\label{pf4:}
By Lemma 5.4 in \cite{G-S08}, almost every point of $X$ is transitive.  Let $x$ be such a point, and consider $f(r)=\mu B(x,r)$.  As $\mu$ is nonatomic $f(0)=0$ and since the function is increasing we may define $R=\sup\{r: \mu B(x,r)<\infty\}$.  Then $f$ is continuous for $r < R$ and moreover left-continuous at $R$ if $R<\infty$.
Let $y\in X$ and $r,\ve>0$.  There is an $n$ such that $d(T^nx,y)<\ve$. Then
 $T^n(B_{r-\ve}(x)) \subset B_r(y) \subset T^n(B_{r+\ve}(x)),$ so that \[\mu (B(x,r-\ve))\leq \mu (B(y,r))\leq \mu (B(x,r+\ve)).\]
For $r<R$ we have then by continuity that $\mu B(x,r)=\mu B(y,r)$.

If $r=R<\infty$, then applying left-continuity and the $r<R$ case we find $\mu B(y,R)=\lim_{\ve\to 0^+} \mu B(y,R-\ve)=\lim_{\ve\to 0^+} \mu B(x,R-\ve)=\mu B(x,R)$.
If $r>R$, then for some $\ve$ we have $\mu (B(y,r)) \geq \mu (B(x,r-\ve))=\infty$, hence $\mu (B(y,r))=\infty=\mu (B(x,r))$.
\end{proof}

The following  proposition makes explicit  some conclusions of the classification theorem in \cite{G-S08} (Theorem~\ref{prop:wmsclass}), and introduces notation that we use when we prove in the remark below that locally finite measure metrics are not possible when $T$ is infinite measure-preserving. These are basically additional conclusions in 
 Proposition 5.6 of \cite{G-S08} that were not stated there explicitly (namely, that every element of $G$ is an isometry and the group is abelian); as mentioned in
 \cite{G-S08},  Proposition 5.6,  is essentially from Akin and Glasner \cite{A-G01}. 

\begin{proposition}[\cite{G-S08}] \label{prop:imp}
	Let $(X,\mu,T)$ be a conservative ergodic nonsingular dynamical system.  If $T$ is not W-measurably sensitive, then it is isomorphic mod 0 to a minimal uniformly rigid invertible isometry on a monothetic, hence abelian, Polish group of isometries.
\end{proposition}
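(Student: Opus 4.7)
The plan is to verify the two additional conclusions beyond what Theorem~\ref{prop:wmsclass} already supplies, namely that the underlying Polish group $G$ actually consists of isometries and is abelian, while citing \cite{G-S08}, Proposition~5.6, and \cite{A-G01} for the construction of the monothetic Polish group structure and for the identification of $X$ with $G$ via a transitive point.

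First I would invoke Theorem~\ref{prop:wmsclass} to replace $(X,\mu,T)$ by an isomorphic system in which $T$ is a minimal, uniformly rigid, invertible isometry of a Polish space $X$ with respect to a $\mu$-compatible metric $d$ (bounded by $1$ by our standing hypothesis). I would then define
\[
G = \overline{\{T^n : n\in\Z\}},
\]
where the closure is taken inside the set of self-maps of $X$ with respect to the topology of uniform convergence induced by $d_\infty(f,g) = \sup_{x\in X} d(f(x),g(x))$. Since $X$ is Polish and $d$ is bounded, $d_\infty$ is a complete metric, and $G$ is separable as the closure of a countable set, so $G$ is Polish. Uniform rigidity supplies a sequence $n_k\to\infty$ with $T^{n_k}\to\mathrm{id}$ uniformly, so $T^{-1} = \lim_k T^{n_k-1}$ lies in $G$, and hence $\{T^n : n\in\Z\}$ is itself a dense subgroup, making $G$ monothetic.

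For the two new claims, the arguments are each one line. If $g=\lim_j T^{m_j}$ uniformly and $x,y\in X$, then
\[
d(g(x),g(y)) \;=\; \lim_j d(T^{m_j}x, T^{m_j}y) \;=\; d(x,y),
\]
so every $g\in G$ is an isometry of $(X,d)$, and in particular $G\subset\mathrm{Iso}(X,d)$. Likewise, if $g=\lim_j T^{m_j}$ and $h=\lim_j T^{\ell_j}$, then $T^{m_j}\circ T^{\ell_j}=T^{\ell_j}\circ T^{m_j}$ for each $j$, and since every element of $G$ is $1$-Lipschitz, composition is jointly continuous in the uniform topology on $G$ (the estimate $d_\infty(f_j\circ g_j, f\circ g)\le d_\infty(g_j,g) + d_\infty(f_j,f)$ uses only that the $f_j$ are isometries). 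Passing to the limit gives $g\circ h = h\circ g$, so $G$ is abelian, as expected for the closure of an abelian subgroup in a topological group.

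For the identification of $X$ with $G$, I would cite Proposition~5.6 of \cite{G-S08} following Akin--Glasner \cite{A-G01}: fixing a transitive point $x_0$ (guaranteed by minimality and Lemma~5.4 of \cite{G-S08}), the evaluation map $\phi:G\to X$, $\phi(g)=g(x_0)$, is a homeomorphism intertwining $T$ on $X$ with left translation by $T$ on $G$. The main obstacle concealed in the cited argument is injectivity of $\phi$: any $g\in G$ fixing $x_0$ must equal the identity because $g$ commutes with every $T^n$ and the orbit $\{T^n x_0\}$ is dense, so $g(T^n x_0)=T^n x_0$ for all $n$, forcing $g=\mathrm{id}$ by continuity. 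With this identification in hand and the two verifications above, the conclusions of the proposition follow.
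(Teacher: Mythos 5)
Your proposal is correct and follows essentially the same route as the paper: both lean on Theorem~\ref{prop:wmsclass} together with Proposition~5.6 of \cite{G-S08} (after Akin--Glasner) for the identification of $X$ with the monothetic Polish group $G$ generated by the powers of $T$, and then add the same two short verifications, that uniform limits of powers of the isometry $T$ are isometries and that the closure of a commuting family is abelian. The only cosmetic differences are that the paper defines $G$ as the commutant of $T$ (which Proposition~5.6 identifies with your closure of $\{T^n\}$) and proves commutativity via a right-cancellation metric estimate rather than your joint-continuity-of-composition argument, which amounts to the same computation.
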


\begin{proof} Suppose $T$ is not W-measurably sensitive with respect to a $\mu$-compatible metric $d$.
As then $d$ is   separable,    there is a dense set of points $\{x_i\}$,  thus collection of balls of rational radius about each $x_i$ is a countable collection of Borel sets that separates points; by Blackwell's theorem \cite[4.5.10]{Sr1998} it generates the Borel sets of $X$.   Lemma 5.3 in \cite{G-S08} shows that there is a positively invariant Borel  set of full measure $X_0$ and
	a  $\mu$-compatible metric $d_T$ on $X_0$ for which $T$ is an isometry, where 
	$d_{T}(x,y)=\sup_{n\geq0}d(T^{n}x,T^{n}y)$. 
	In Theorem~\ref{prop:wmsclass} \cite{G-S08}, $(X_0,\mu,d_T,T)$ is
	extended to its topological completion (of which it is a Borel subset as $X_0$ is standard, \cite[p. 134]{Par67}). Relabel this system $(X,\mu,T)$ with metric $d_X$.  Proposition 5.6 in \cite{G-S08} and the discussion preceding it demonstrate the following.  Let $G$ be the set of continuous transformations on $X$ which commute with $T$.  Then $G$ is a group.  For $S_1,S_2\in G$ define $d_G(S_1,S_2)=\sup_{x\in X}d_X(S_1x,S_2x)$.  Then $d_G$ is a metric on $G$ and $G$ is equal to the closure of the set $\{\mathbb{I},T,T^2,\dots\}$, so it is a monothetic group.  Moreover, for any $x\in X$ the map $\phi_x:G\to X$ given by $\phi_x(S)=Sx$ is an isometry.  Choose one such map $\phi$ and use it to induce a measure $\eta$ on $G$.  Let $T:G\to G$ denote the group rotation on $G$ by the element $T$.  Then $(G,\eta,T)$ is isomorphic to $(X,\mu,T)$ by $\phi$.   
	
Since $\phi_x$ is an isometry for all $x$, $d_X(S_1x,S_2x)=d_G(S_1,S_2)= 
d_X(S_1y,S_2y)$ for all $x,y\in X$. Then a right cancellation law holds:
$d_G(S_1S,S_2S)=d_G(S_1Sx,S_2Sx)=d_G(S_1(Sx),S_2(Sx))=d_G(S_1,S_2)$, 
for $S,S_1,S_2\in G$. For every map $S\in G$ there is a sequence $T^{n_k}$ which converges uniformly to $S$.  It follows that $S$ is an isometry on $(G,d_G)$. That $G$ is abelian is a direct consequence since it is monothetic:  Let $S_1,S_2\in G$ and $\ve>0$.  Choose $k$ so that $d_G(T^kS_1,S_2)<\ve/2$.  Then
$d_G(S_1S_2,S_2S_1)\leq d_G(T^kS_1S_2,S_2S_2)+d_G(S_2S_2,S_2T^kS_1)=2d_G(T^kS_1,S_2)<\ve.$
\end{proof}

We will need the following lemma, that may be standard, to show local compactness.
Say a metric $d$ is {\it locally measure finite} if  each point of the space is
contained is an open $d$-ball of finite measure.

\begin{lemma}\label{L:locfincompact}  Let $(X,d)$ be a complete metric space. Let $\mu$ be a Borel measure on $X$
such that all open balls of the same  radius have the same measure, and such that $d$ is locally
measure finite.  Then $(X,d)$ is locally compact.
\end{lemma}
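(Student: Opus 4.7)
The plan is to fix an arbitrary $x\in X$ and, using local measure finiteness, choose $R>0$ with $\mu(B(x,R))<\infty$; I will show that the closed ball $\overline{B(x,R/2)}$ is compact. Since $X$ is complete, this closed subset is itself complete, so by the Heine--Borel characterization it suffices to verify total boundedness.

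To that end, I would fix $\delta>0$ and produce a finite $\delta$-net for $\overline{B(x,R/2)}$. The case $\delta\geq R$ is trivial since $\{x\}$ works, so assume $\delta<R$ and argue by contradiction: if no finite $\delta$-net exists, a greedy inductive selection produces an infinite sequence $\{x_i\}\subset\overline{B(x,R/2)}$ with $d(x_i,x_j)\geq\delta$ whenever $i\neq j$. The open balls $B(x_i,\delta/2)$ are then pairwise disjoint, and because $\delta/2<R/2$ each of them is contained in $B(x,R/2+\delta/2)\subset B(x,R)$.

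At this point the uniform-measure hypothesis takes over: each $\mu(B(x_i,\delta/2))$ equals the common value $c:=\mu(B(x,\delta/2))$, so countable additivity forces $\sum_{i\geq 1} c\leq \mu(B(x,R))<\infty$. Provided $c>0$, having infinitely many equal positive summands contradicts finiteness of $\mu(B(x,R))$, so a finite $\delta$-net does exist. Since $\delta$ was arbitrary, $\overline{B(x,R/2)}$ is totally bounded, hence compact; since $x$ was arbitrary, $X$ is locally compact.

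The delicate step, and the one I would flag as the main obstacle, is verifying the positivity $c=\mu(B(x,\delta/2))>0$ (without it the contradiction is vacuous and, for instance, a trivial measure would satisfy all hypotheses without forcing local compactness). This does not follow directly from the literal wording of the lemma, but in the setting where it is invoked the ambient metric is $\mu$-compatible in the sense of Section~\ref{sec:measurable_sensitivity} (it is the one constructed in Proposition~\ref{prop:imp}), so every nonempty open ball has positive measure; applying $\mu$-compatibility supplies $c>0$ and closes the argument.
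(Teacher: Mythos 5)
Your proof is correct and takes essentially the same route as the paper's: both arguments pack a finite-measure ball around an arbitrary point with disjoint small balls of equal measure to conclude total boundedness of a closed ball, and then use completeness to get compactness (the paper phrases it via a maximal disjoint family whose doubled balls cover, you via a contradiction with an infinite $\delta$-separated set). The positivity issue you flag is genuine but is equally implicit in the paper's own proof (its step ``as all balls have the same measure, $K$ is finite'' also needs the common measure of small balls to be positive), and as you note it is supplied in the only place the lemma is used, where the metric is $\mu$-compatible.
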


\begin{proof}
For  $x$ in $X$ there exists $r>0$ so that for all $0<\varepsilon <r/2$ the closure of
the ball $B(x,r+\varepsilon)$, denoted $E$, has finite measure.  As all balls have
the same radius we may assume $r$ works for all $x$.  Let $0<\varepsilon <r/2$ and $K$ be the set of all positive integers $k$
so that there are $k$  disjoint balls each of radius $\varepsilon$ contained in $E$. As all balls have the same measure, $K$ is finite. Let $\ell$ be its maximum and $B(y_1,\varepsilon),\ldots, B(y_\ell,\varepsilon)$ be balls that attain this maximum. Then $E\setminus \bigcup_{i=1}^\ell B(y_i,\varepsilon)$ does not contain
any ball of radius $\epsilon$. It follows that $ \bigcup_{i=1}^\ell B(y_i,2\ve)$
covers the closure of $B(x,r)$, denoted $E_0$. For if not there would exist $y$ in $E_0$ such
that $d(y,y_i)\geq 2\ve$ for all $i=1,\ldots,\ell$, which would imply that 
$B(y,\ve)$ is in $E\setminus \bigcup_{i=1}^\ell B(y_i,\varepsilon)$, a contradiction.  So  $E_0$ is totally bounded, and since it is closed and $X$ is  complete, it is compact. 
\end{proof}

\begin{remark} Let $T$ be conservative ergodic and  infinite measure-preserving on $(X,\mu)$.  
Suppose $T$ is not W-measurably sensitive with respect to a $\mu$-compatible metric $d$ and $d$
is locally measure finite. Using the notation in the proof of  Proposition~\ref{prop:imp} (and Theorem~\ref{prop:wmsclass} \cite{G-S08}), the metric $d_T$  is also locally measure finite
(since $B^{d_T}(x,r)\subset B^d(x,r)$), and so is its completion.
Relabel this system $(X,\mu,T)$ with metric $d_X$ and 
let $(G,\eta,T)$  and $d$ also be as in the proof of Proposition~\ref{prop:imp}.
As $\phi$ is an isometry and $\eta$ is the image measure, $d$ on $(G,\eta)$ is also locally measure finite. By Lemma~\ref{lem2:}, balls of the same radius in  
$(G,\eta,T,d)$ have the same measure, so by  Lemma~\ref{L:locfincompact}  $(G,d) $ is locally compact. As $T$ is minimal, $\Z$  is dense in the locally compact group $G$, and since $G$ is not discrete, then $G$ would be compact, see, for example
  \cite[p. 71]{Morr77} (this is just a folk theorem that there is no minimal map on a noncompact, locally compact space). The local measure finiteness property implies that $\eta$ is a finite measure,
   contradicting that $T$ is infinite measure-preserving. Thus if $T$ is infinite measure-preserving and not W-measurably sensitive for a compatible metric $d$, then 
  nonempty $d$-open sets have infinite measure.
\end{remark}

\section{Entropy and Sensitivity} 
\label{sec:entropy_and_sensitivity}


Sensitivity is only one way to capture the notion of chaos. It is generally accepted that entropy should be stronger than sensitivity notions. We show that positive entropy implies WM-sensitivity, improving a result of Cadre and Jacob \cite{Cadre05}. The preceding sections suggest that Li-Yorke M-sensitivity is a fundamentally stronger concept than W-measurable sensitivity.  Here we demonstrate examples of transformations which are W-measurable sensitive but not Li-Yorke M-sensitive.

\begin{proposition}\label{prop:2}Let $T$ on $(X,\mu)$ be an ergodic finite measure-preserving transformation. If $T$ has positive entropy, it is W-measurably sensitive.

\end{proposition}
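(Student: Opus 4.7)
The plan is to prove the contrapositive: assuming that the ergodic finite measure-preserving transformation $T$ is not W-measurably sensitive, I will show that $T$ must have zero entropy. By Theorem~\ref{prop:wmsclass} together with Proposition~\ref{prop:imp}, $T$ is then isomorphic mod $0$ to the translation $(G,\eta,T)$ by a topological generator on a monothetic abelian Polish group $G$, with $\eta$ the push-forward of $\mu$ under the isomorphism.

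The main intermediate step is to show that $G$ is compact. Since $\mu$ is finite and nonatomic, so is $\eta$; in particular $\eta$ assigns finite measure to every open ball in $G$, so the metric $d_G$ is locally measure finite. Using density of the integer powers of the generator together with continuity of translation on the Polish group, the $T$-invariance of $\eta$ can be promoted to full translation invariance of $\eta$ by a standard regularity and continuous-function approximation argument; hence $\eta$ is Haar measure, and balls of the same radius in $G$ all have the same measure, parallel to Lemma~\ref{lem2:}. Lemma~\ref{L:locfincompact} then implies that $(G,d_G)$ is locally compact. Because $\eta$ is nonatomic and $\mu$-compatible, $G$ is nondiscrete, and a nondiscrete locally compact monothetic topological group is compact (as noted in the remark following Proposition~\ref{prop:imp}); so $G$ is compact.

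Once $G$ is compact, $T$ is a minimal rotation on a compact abelian metric group equipped with normalized Haar measure, i.e., a Kronecker system, and such systems have Kolmogorov--Sinai entropy zero. A quick justification is that the rotation is an isometry of a compact metric space, so its topological entropy vanishes and dominates the measure-theoretic entropy with respect to any invariant measure. This contradicts the hypothesis of positive entropy, so $T$ must be W-measurably sensitive. The main obstacle is establishing compactness of $G$, but this follows from the template already used in the remark after Proposition~\ref{prop:imp}, which simplifies in the present setting since the finite measure hypothesis makes local measure finiteness automatic.
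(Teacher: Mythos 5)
Your proof is correct, and its overall strategy coincides with the paper's: argue the contrapositive, use the classification of non-W-measurably-sensitive systems to realize $T$ as a rotation on a group, and conclude zero entropy. The difference is in the middle. The paper gets compactness for free by citing the finite measure-preserving classification in \cite{G-S08} (Theorem 2), which already yields a compact group rotation, then quotes Walters (\cite{Wa82}, Theorem 4.25) for zero entropy of compact abelian group rotations. You instead start from the Polish monothetic group $(G,\eta)$ of Proposition~\ref{prop:imp} and re-derive compactness inside the paper's own framework: finiteness of $\mu$ makes $d_G$ locally measure finite, translation invariance of $\eta$ (equivalently Lemma~\ref{lem2:}, which applies directly since the system is conservative ergodic measure-preserving and the rotation is an invertible isometry for the $\eta$-compatible metric $d_G$) gives equal measure to balls of equal radius, Lemma~\ref{L:locfincompact} gives local compactness, and nonatomicity plus the folklore fact about nondiscrete locally compact monothetic groups (as in the remark after Proposition~\ref{prop:imp}) gives compactness; finally you replace the Walters citation by the observation that an isometry of a compact metric space has zero topological entropy, which dominates the measure entropy by the variational principle. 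Both routes are sound; yours is longer but more self-contained, essentially reusing for the finite measure case the machinery the paper only deploys in its infinite measure remark, while the paper's proof is a three-line appeal to external results.
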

\begin{proof}\label{pf11:}
We show the contrapositive.  If $T$ is not W-measurably sensitive then we know it is measurably isomorphic to a compact group rotation (\cite{G-S08}, Theorem 2). By the exact same argument as in the proof of Proposition \ref{prop:imp} this group is abelian. It is well know that compact abelian group rotations have zero entropy (\cite{Wa82}, Theorem 4.25).
\end{proof}

\begin{corollary}All finite measure-preserving ergodic transformations which have positive entropy but which are not weak mixing are W-measurably sensitive and not Li-Yorke M-sensitive. \end{corollary}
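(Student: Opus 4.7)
The corollary is a direct combination of two results already established in the paper, so the plan is largely citational, with a small remark on non-vacuity.

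First I would invoke Proposition~\ref{prop:2}: any ergodic finite measure-preserving transformation $T$ with positive entropy is W-measurably sensitive. Since our hypotheses already include ``ergodic, finite measure-preserving, positive entropy,'' this immediately yields the first conclusion.

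Next I would apply Theorem~\ref{thm:weak_mixing}, which establishes that in the finite measure-preserving ergodic setting, Li-Yorke M-sensitivity is equivalent to weak mixing. Since by hypothesis $T$ is not weakly mixing, the contrapositive gives that $T$ is not Li-Yorke M-sensitive. Together these two citations give both halves of the conclusion.

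The only minor content beyond stringing references is making sure such transformations actually exist, so the corollary is not vacuous. For this I would point to a standard construction: let $(Y,\nu,S)$ be any Bernoulli shift on a finite measure-preserving space, which has positive entropy, and let $(Z,\lambda,R)$ be an irrational rotation of the circle. Then $T=S\times R$ on $(Y\times Z, \nu\times \lambda)$ is ergodic and finite measure-preserving. Its entropy is at least that of $S$ (entropy of a product is the sum of entropies when the second factor has finite entropy, and rotations have zero entropy), so $T$ has positive entropy. On the other hand $R$ is a nontrivial rotation factor of $T$, so $T$ is not weakly mixing.

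The only conceivable obstacle here is the non-vacuity step; both main assertions are immediate from earlier results. Hence the proof is a two-line deduction followed by an example.
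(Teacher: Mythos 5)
Your proposal is correct and matches the paper's (implicit) argument exactly: the corollary is an immediate combination of Proposition~\ref{prop:2} (positive entropy implies W-measurable sensitivity) and Theorem~\ref{thm:weak_mixing} (Li-Yorke M-sensitivity is equivalent to weak mixing in the finite measure-preserving ergodic case). The added non-vacuity example is a harmless bonus not required by the statement.
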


\begin{remark}
We now construct examples of W-measurably sensitive but not Li-Yorke M-sensitive transformations with zero entropy.  Let $(X,\mu,T)$ be a finite measure-preserving doubly ergodic transformation with zero entropy and let $(Y,\nu,R)$ be the rotation on two points.  Consider the dynamical system $(X\times Y,\mu\times\nu,T\times R)$.  This clearly has zero entropy.  We show this system is W-measurably sensitive and not Li-Yorke M-sensitive.  Let $S=T\times R$.  To see it is not Li-Yorke M-sensitive, one could note that it has a rotation factor and apply Theorem \ref{thm:weak_mixing}.  More explicitly, one could consider any $\mu$-compatible metric $d_X$ on $X$ and the $\mu$-compatible metric $d_Y$ on $Y$ which has $d_Y(1,2)=1$.  Then define $d$ on $X\times Y$ by 
\begin{equation*}
	d((x_1,y_1),(x_2,y_2)) = d_X(x_1,x_2) + d_Y(y_1,y_2).
\end{equation*} 
It is easy to see that $d$ is a $\mu$-compatible metric.  If $x_1,x_2 \in X$ then
\begin{equation*}
	d(S^n(x_1,1),S^n(x_2,2)) \geq 1
\end{equation*}
for all $n \geq 0$. Hence the pair $((x_1,1),(x_2,2))$ is not proximal and so not Li-Yorke. Thus the set $(X \times \{1\}) \times (X \times \{2\}) \subset (X \times Y)^2$ contains no Li-Yorke pairs and has positive measure, so $S$ is not Li-Yorke M-sensitive.

Now we show that the system is W-measurably sensitive. By Proposition 7.2 \cite{G-S08} it suffices to show that $S$ is measurably sensitive. Let $d$ be an arbitrary $\mu$-compatible metric on $X \times Y$. Then choose $\delta > 0$ such that $2 \delta < \diam(X \times \{i\})$. Choose a point $z = (x,i) \in X \times Y$ and consider $B_{\ve}(z)$. Then we know that there must exist an $i$ such that $\mu(B_{\epsilon}(z) \cap (X \times \{i\})) > 0$. Let $E$ be the projection of $B_{\ve}(z) \cap (X \times \{i\})$ onto $X$. Note that $E$ has positive measure. Now choose sets $A,B \subset X$ of positive measure such that $d(A,B) >2 \delta$. Since $T$ is doubly ergodic we can find an $n$ such that
\begin{align*}
	\mu(T^{-n}(A) \cap E) &> 0,\\
	\mu(T^{-n}(B) \cap E) &> 0.
\end{align*}
Then we see that $\diam(T^n(E)) >2 \delta$ so that by the triangle inequality there must exist a $z' \in E$ such that $d(T^n z, T^n z') > \delta$. Hence $T$ is measurably sensitive and thus W-measurably sensitive. 
\end{remark}

\section{Sensitive Metrics} 
\label{sec:sensitive_metrics}

We show   that under  mild hypotheses, every nonsingular transformation admits a $\mu$-compatible metric
that is W-measurably sensitive, and also  that a transformation where the set
of periodic points has positive measure never admits a $\mu$-compatible metric that is sensitive.
  This motivates the definition requiring that sensitivity be exhibited for all $\mu$-compatible metrics. We note that    the existence of a $\mu$-compatible metric for every
 transformation follows from  Proposition~\ref{prop:compatible_metric}.
After the research for this work was completed, we learned of the recent work of Morales \cite{Mo13}, where he obtains results related to Corollary 8.3 and Proposition 8.5 below, though
 the methods and context differ: where Morales investigates partitions,
we study metrics. Using \cite[Theorem 2.6]{Mo13}, it can be shown that the existence of a 
 sensitive metric implies the existence of a sensitive partition.  Measure expansive, being defined in terms of partitions, is then related but not equivalent to W-measurable sensitivity, as irrational rotations are measure expansive \cite[Example 2.2]{Mo13} but not W-measurably sensitive.  It would be interesting to investigate further connections.


We start with a generalization, suggested by the referee, of our original proof that assumed conservative ergodic; this needs a preliminary 
definition.
 We say a transformation $T$ has
{\it spillover} if there exists a countable collection of Borel sets $A_n$ such that
$A_n\subset A_{n+1}$, $\bigcup_{n=1}^\infty A_n=X$, and  $(A_n)_T$ has measure zero for each $n$, where
for a set $A$ we  write $A_T=\bigcap_{i=0}^\infty T^{-i}(A)$ (the largest positively-invariant set in $A$). Clearly, when $T$ is conservative ergodic then it satisfies spillover, and so does $T\times S$ for any nonsingular $S$.

\begin{proposition}\label{P:spillover}
Let  $(X,\mu,T)$ be nonsingular system that satisfies the spillover condition. 
If $T$ admits   a $\mu$-compatible metric $d'$ for which $T$ is an isometry, then 
$T$ admits   a $\mu$-compatible metric $d$ for which $T$ is W-measurably sensitive.
\end{proposition}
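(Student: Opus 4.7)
The strategy is to perturb the isometric metric $d'$ by a Borel function $\phi$ that is constant on the level sets of the spillover partition but takes distinct values on distinct levels, then verify $\mu$-compatibility via a Lebesgue density argument and W-measurable sensitivity via the escape property.

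Let $\{A_n\}$ be the spillover sets and set $\ell(x) = \min\{n : x\in A_n\}$, so $X$ partitions into level sets $C_n = A_n\setminus A_{n-1}$. Because $T$ is nonsingular and each $(A_n)_T$ is null, the set $\bigcup_{k\ge 0}T^{-k}((A_n)_T)$ is also null for each $n$. After removing the countable union over $n$, we may assume that every orbit $\{T^k x\}$ escapes every $A_n$ infinitely often, i.e.\ $\limsup_k \ell(T^k x) = \infty$ for every $x\in X$. Set $\phi(x) = 2^{-\ell(x)}$ and define
\[
d(x,y) := \tfrac{1}{2}\min\{d'(x,y),\,1\} + |\phi(x) - \phi(y)|,
\]
a Borel metric on $X$ bounded by $1$.

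For $\mu$-compatibility, apply the standard Lebesgue density argument in the separable Borel space $(X,d',\mu)$: $\mu$-a.e.\ $x$ is a $d'$-density point of $C_{\ell(x)}$, meaning $\mu(B_{d'}(x,r)\cap C_{\ell(x)})>0$ for all $r>0$. On the resulting full-measure set $X_1$, each $d$-ball $B_d(x,r)$ contains $B_{d'}(x,2r)\cap C_{\ell(x)}$ for $r$ smaller than $2^{-\ell(x)-1}$, and so has positive measure. Then extend $d|_{X_1}$ to a $\mu$-compatible metric on all of $X$ via Lemma~\ref{prop:extendmetric}.

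For W-measurable sensitivity, since $T$ is a $d'$-isometry, the first summand of $d(T^k x, T^k y)$ is constant in $k$, so sensitivity must come from $|\phi(T^k x)-\phi(T^k y)|$. Using the escape property for both coordinates together with the spillover of $T\times T$ (which follows from the remark that $T\times S$ inherits spillover for any nonsingular $S$), the plan is to show: for each $x\in X$ and a.e.\ $y\in X$ there are infinitely many $k$ at which $\ell(T^k x)$ and $\ell(T^k y)$ lie at disparate scales, yielding $|\phi(T^k x) - \phi(T^k y)|\ge\delta$ for some fixed $\delta>0$ (for instance $\delta = 1/4$, realized whenever exactly one of $T^k x, T^k y$ lies in $A_1$).

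\textbf{Main obstacle.} The delicate point is producing the level \emph{mismatch} for a.e.\ pair. Bare spillover gives that each individual orbit escapes each $A_n$, but not that the escape times of $T^k x$ and $T^k y$ are out of phase often enough to guarantee infinitely many $k$ with, say, $T^k x\in A_1$ and $T^k y\notin A_1$. I would resolve this either by refining the spillover sequence so that $(A_n^c)_T$ is also null (producing two-sided recurrence between $A_n$ and $A_n^c$), or by working on $X\times X$ and applying the spillover of $T\times T$ to the rectangles $A_1\times A_1^c$ and $A_1^c\times A_1$ to force the symmetric-difference event $\{T^k x\in A_1\}\bigtriangleup\{T^k y\in A_1\}$ to occur infinitely often. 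Carrying out this reduction cleanly is the main technical step.
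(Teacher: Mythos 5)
Your construction has a genuine gap, and it is exactly the step you flag as the ``main obstacle.'' With $d(x,y)=\tfrac12\min\{d'(x,y),1\}+|\phi(x)-\phi(y)|$, the first summand is constant along orbits but pair-dependent, so it cannot furnish a single sensitivity constant $\delta$; everything rests on producing level mismatches $|\phi(T^kx)-\phi(T^ky)|\ge\delta$ infinitely often for a.e.\ pair, and bare spillover gives no such joint control. Your two proposed repairs do not close this. Spillover of $T\times T$ only asserts that \emph{some} exhausting sequence $B_n\subset X\times X$ with $(B_n)_{T\times T}$ null exists; it does not say that the $T\times T$-orbit of a.e.\ pair visits the specific rectangles $A_1\times A_1^c$ and $A_1^c\times A_1$ infinitely often --- that kind of recurrence to prescribed positive-measure sets is what conservativity/ergodicity of $T\times T$ would give, a much stronger hypothesis (handled separately in Section~\ref{S:mixLiYorke}) which is not available here. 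Likewise, arranging $(A_n^c)_T$ to be null controls each orbit separately but says nothing about the relative phase of the two orbits, so a positive-measure set of pairs could have $\ell(T^kx)=\ell(T^ky)$ for all $k$ with $d'(x,y)$ small, and for such pairs your $d$ never separates them.

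The missing idea is to make the partition pieces \emph{small in $d'$}, so that the isometry itself forces the mismatch and no joint recurrence is needed. That is the paper's route: intersect each $A_n$ with finitely many $d'$-balls of radius $<1/2n$ covering all but measure $1/2n$ of the space, disjointify, and discard null pieces and the null sets $(A_n)_T$, obtaining a countable partition $\{D_n\}$ into positive-measure sets whose $d'$-diameters tend to $0$ and on each of which $d'$ (or any chosen compatible metric on the exceptional piece $D_0$) is $\mu$-compatible after deleting a null set. Define $d$ to agree with these metrics within each piece and to equal $1$ across distinct pieces; this $d$ is $\mu$-compatible. For a.e.\ $x$ the index $N(T^ix)$ of the piece containing $T^ix$ is unbounded (this is where spillover is used, for the single orbit of $x$), and for \emph{every} $y\neq x$ one has $d'(T^ix,T^iy)=d'(x,y)>0$; hence whenever $T^ix$ lies in a piece of $d'$-diameter less than $d'(x,y)$, the two orbit points lie in different pieces and $d(T^ix,T^iy)=1$. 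This yields W-measurable sensitivity (constant $1/2$, say) for all $y\neq x$ at once. Your level sets $C_n=A_n\setminus A_{n-1}$ need not be $d'$-small, which is precisely why ``different levels'' cannot be detected by the isometry and must instead be forced combinatorially --- the step your argument cannot supply. Your $\mu$-compatibility discussion is essentially fine (it follows from the Lindel\"of removal argument of Lemma~\ref{L:sepcompmetric} rather than a density theorem), but the sensitivity half of the proof is not established.
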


\begin{proof} 
We may assume $\mu$ is a probability measure and the sets $A_n$ have been chosen
so that $\mu(X\setminus A_n)<\frac{1}{2n}$. For each $n$ cover $X$ with a countable
collection of $d'$-balls of radius less than $1/2n$. Then there exist finitely many such balls,
denote them $A_{n,1}\ldots, A_{n,k_n}$, such that $\mu(X\setminus \bigcup_{i=1}^{k_n}A_{n,i})<\frac{1}{2n}$.   Write $A^\prime_{n,i}=A_n\cap A_{n,i}$. Then  the union $\bigcup_{n,i} A_{n,i}$ 
 has full measure; let its measure zero complement be $E_0$. Rename the sets
$A_{n,i}^\prime$ as $D'_0, D'_1, \ldots$ and then disjointify them by defining 
$D''_0=D'_1, D''_n= D'_n\setminus (D'_0\cup\ldots\cup D'_n)$. We may assume $D''_0$ has positive measure and is in $A_1$. While the metric $d'$ may not be $\mu$-compatible 
when restricted to each  $D''_n$ of positive measure,  it is separable, so it is $\mu$-compatible after removing a set of measure zero from each $D''_n$ of positive measure. Let $E_1$ the union of all the measure zero sets that have been removed, and all of the $D''_n$ of measure zero. Let $Z=\bigcup (A_n)_T$. Adjoin the measure zero sets $E_0, E_1, Z$ to $D''_0$ and rename it $D_0$. Rename the remaining 
positive measure sets $D''_n$ by $D_1, D_2, \ldots$. Now the sets $D_n$ form a positive measure partition of $X$, $d'$ is $\mu$-compatible when restricted to each $D_n, n>0$, and the $d'$ diameter of $D_n$ decreases to $0$. Choose  a $\mu$-compatible metric $d_0$ on  $D_0$.
When $x\in D_n$ write $N(x)=n$.  Then define a $\mu$-compatible metric $d$ on $X$ to be $d_i$ for two points in $D_i$ and one otherwise.
 
Note that $Z_0=\bigcup_{i\geq 0} T^{-i}(E_0\cup E_1)$ is of measure zero. Let  $x\notin Z\cup Z_0$. As $x\notin Z$,  the sequence  $N(T^i(x))$ is unbounded. Let $y\in X, y\neq x$
and set $\delta=d'(x,y)>0$. There exists $K$ so that for all
$n>K$ the $d'$-diameter of $D_n$ is less than $\delta$.  Since $d'(T^i(x),T^i(y))=d'(x,y)$,
when $N(T^i(x))>K$, then $T^i(x)$ and $T^i(y)$ cannot be in the same $D_n$
for $n>K$. Thus $d'(T^i(x),T^i(y))=1$ for infinitely many $i$. Therefore 
$T$ is W-measurably sensitive for $d$.
\end{proof}

Recall that a transformation is {\it aperiodic} if its set of periodic points has measure zero.
In the following lemma we need Rohlin's Lemma for nonsingular transformations that are not
necessarily invertible. We use a version proved in \cite[Theorem 2 and Remark 1]{AB06}: If $T$ is nonsingular and forward
measurable, for each $\varepsilon>0$ and each $n\in\N$, there exists a measurable set $C$
with $\mu(C)<1/n+\varepsilon$ and 
such that $C, T^{-1}(C),\ldots T^{-n+1}(C)$ are disjoint and of measure $>1-\varepsilon$.

\begin{lemma}\label{L:apspill}
Let $T$ be a nonsingular transformation and suppose it is forward measurable.  Then $T$ is aperiodic if and only if $T$ satisfies the spillover condition.
\end{lemma}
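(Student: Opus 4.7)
\textbf{Plan for the proof of Lemma \ref{L:apspill}.} The backward direction is straightforward. Suppose $T$ satisfies spillover with sets $\{A_n\}$, and let $x$ be periodic of period $k$. The forward orbit $\{x, Tx, \ldots, T^{k-1}x\}$ is finite, and since the $A_n$ increase to $X$, some $A_n$ contains the entire orbit. Then $T^i x \in A_n$ for every $i \geq 0$, so $x \in (A_n)_T$. Hence the set of periodic points lies in $\bigcup_n (A_n)_T$, which has measure zero, and $T$ is aperiodic.

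For the forward direction, suppose $T$ is aperiodic. I would first invoke Lemma~\ref{lem:sigma_finite} to replace $\mu$ by an equivalent probability measure; the spillover condition depends only on the null sets, so this is harmless. The idea is to build the $A_n$ as complements of a decreasing sequence of Rohlin bases. For each $k \geq 1$, apply the nonsingular Rohlin Lemma of \cite{AB06} with tower height $h_k \geq 2^{k+1}$ and error $\varepsilon = 2^{-(k+1)}$ to produce a measurable set $C_k$ satisfying
\[
\mu(C_k) < \tfrac{1}{h_k} + 2^{-(k+1)} < 2^{-k},
\]
such that $C_k, T^{-1}(C_k), \ldots, T^{-(h_k-1)}(C_k)$ are pairwise disjoint and their union $R_k$ has $\mu(X \setminus R_k) < 2^{-(k+1)}$. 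Define $B_n = \bigcup_{k \geq n} C_k$ and $A_n = X \setminus B_n$. Then $A_n \subset A_{n+1}$, and $\mu(B_n) \leq 2^{-(n-1)} \to 0$, so after adjoining the null set $\bigcap_n B_n$ to each $A_n$ the family fills all of $X$ (this adjustment changes each $(A_n)_T$ by at most $\bigcup_i T^{-i}(\bigcap_n B_n)$, still null by nonsingularity).

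The heart of the argument is verifying that $(A_n)_T$ is null, equivalently that $\bigcup_{i \geq 0} T^{-i}(B_n)$ has full measure for each $n$. By construction $\sum_k \mu(X \setminus R_k) < \infty$, so the Borel--Cantelli Lemma applied in the probability space $(X,\mu)$ yields that almost every $x$ belongs to $R_k$ for all sufficiently large $k$. For such an $x$ and any $n$, pick any $k \geq n$ with $x \in R_k$; then $x \in T^{-i}(C_k)$ for some $0 \leq i < h_k$, so $T^i x \in C_k \subset B_n$, completing the verification. The main obstacle is the quantitative calibration of the Rohlin parameters: the nonsingular Rohlin Lemma only guarantees $\mu(C) < 1/n + \varepsilon$, so one must take the tower height $h_k$ large enough to drive $\mu(C_k)$ below $2^{-k}$ while simultaneously arranging that the complement of the tower is summably small, so that Borel--Cantelli closes the argument.
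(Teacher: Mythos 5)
Your proof is correct, and both directions are sound: the backward direction makes explicit the argument the paper dismisses as clear, and the forward direction uses the same backbone as the paper's proof, namely the nonsingular Rohlin Lemma of \cite{AB06} applied for each $k$ with geometrically calibrated height and error. Where you genuinely differ is in how the spillover sets are assembled and how $(A_n)_T$ is shown to be null. The paper sets $B_k$ equal to the tower with its bottom two levels removed and takes $A_m=\bigcap_{k\geq m}B_k$; since every point of $B_k$ has a forward iterate landing in the base $C_k$, which is excluded from $B_k$, the positively invariant part of $B_k$ (hence of $A_m$) is empty by a purely pointwise argument, and the only measure estimate needed is that $\mu(A_m)\to 1$. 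You instead take $A_n=X\setminus\bigcup_{k\geq n}C_k$, which is a larger set containing the tower complements, so a point could conceivably avoid every base by hiding in the residual sets $X\setminus R_k$; you close this gap with Borel--Cantelli (in fact $\mu(X\setminus R_k)\to 0$ alone suffices, since for each $n$ you only need one $k\geq n$ with $x\in R_k$). So your route trades the paper's "invariant part is literally empty" for a mild measure-theoretic step, at the benefit of simpler sets $A_n$; you are also slightly more careful than the paper on two small points, namely adjoining the null set $\bigcap_n B_n$ so that $\bigcup_n A_n$ equals $X$ exactly and checking that this perturbs each $(A_n)_T$ only by $\bigcup_i T^{-i}$ of a null set, and reducing to a probability measure via Lemma~\ref{lem:sigma_finite}, which the paper leaves implicit.
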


\begin{proof} Clearly, if the set of periodic points has positive measure, the transformation does not satisfy the spillover condition. Suppose, now, that $T$ is aperiodic. 
  Let $k\in\N$. By Rohlin's Lemma, for $\varepsilon=1/2^k$ and $n=2^k$ there exits
a set $C_k$ such that $C_k, T^{-1}(C_k),\ldots, T^{-n+1}(C_k)$ are disjoint with 
\[\mu(\bigsqcup_{i=0}^{n-1}T^{-i}(C_k))>1-\frac{1}{2^k}\text{ and }\mu(C_k)<\frac{2}{2^k}.\]
Let
\[
B_k=\bigsqcup_{i=2}^{n-1}T^{-i}(C_k).
\]
Then if $x\in B_k$  there is some $j$ so that $T^j(x)\in C_k$, which is disjoint from $B_k$, so
$\mu([B_k]_T)=0$. Also $\mu(B_k)>1-\frac{3}{2^k}$. Finally set 
\[A_m= \bigcap_{k\geq m}B_k.\]
Then one can verify that $A_m\subset A_{m+1}$ and $\mu([A_m]_T)=0.$ Furthermore,
\begin{align*}
\mu(A_m)&=\mu(\bigcap_{k\geq m}B_k)=\mu(X\setminus \bigsqcup_{k\geq m}(B_k)^c)\\
&>1- (\frac{3}{2^m}+\frac{3}{2^{m+1}}+\cdots)=1-\frac{3}{2^{m-1}}.
\end{align*}
Therefore $\bigcup_{m\geq 1}A_m=X$ mod $\mu$ and $T$ satisfies spillover.
\end{proof}

\begin{corollary}\label{C:existscompatible}
Let $(X,\mu,T)$ be a nonsingular system. 	If  $T$ is aperiodic, forward measurable and admits a $\mu$-compatible isometry, or if $T$ is  conservative ergodic, then there exists a $\mu$-compatible metric $d$ for which $T$ is W-measurably sensitive.
\end{corollary}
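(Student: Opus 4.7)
The plan is to split on the two hypothesis clauses and in each case reduce directly to the preceding results. For the first clause, suppose $T$ is aperiodic, forward measurable, and admits a $\mu$-compatible metric $d'$ for which $T$ is an isometry. Then Lemma~\ref{L:apspill} converts aperiodicity plus forward measurability into the spillover condition, and Proposition~\ref{P:spillover} immediately yields a $\mu$-compatible metric $d$ for which $T$ is W-measurably sensitive. This half is just quoting the two results in sequence.

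For the second clause, suppose $T$ is conservative ergodic. I would apply the dichotomy in Theorem~\ref{prop:wmsclass}. If $T$ is already W-measurably sensitive, then Proposition~\ref{prop:compatible_metric} yields some $\mu$-compatible metric $d$ on $X$, and the definition of W-measurable sensitivity (which requires the property for \emph{every} $\mu$-compatible metric) finishes this subcase. Otherwise, Theorem~\ref{prop:wmsclass} together with Proposition~\ref{prop:imp} provides an isomorphism mod~$0$ to an invertible minimal uniformly rigid isometry on a Polish group whose isometric metric is $\mu$-compatible; by pulling back as in the proof of Proposition~\ref{prop1:}, I would obtain a $\mu$-compatible metric $d'$ on $X$ for which $T$ acts as an isometry (after discarding a null set if necessary). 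It then remains to verify spillover and apply Proposition~\ref{P:spillover}.

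The spillover check for conservative ergodic $T$ proceeds as follows. I pick an increasing sequence of Borel sets $A_n$ with $\bigcup_n A_n = X$ and with $\mu(X\setminus A_n)>0$ for every $n$: in the $\sigma$-finite infinite measure case take each $A_n$ of finite measure, while in the finite measure case take $A_n = X\setminus C_n$ for a descending sequence $C_n$ of positive measure sets with empty intersection, which exists by nonatomicity. For each $n$ the set $(A_n)_T$ is positively invariant, hence invariant mod~$\mu$ by conservativity, hence null or conull by ergodicity; being contained in $A_n$ it cannot be conull, so it is null. This gives the spillover condition, and Proposition~\ref{P:spillover} concludes the argument. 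The only real subtlety is the transport of the isometric metric from the Polish group model back to $X$, which is handled in the isomorphism-invariance style of Proposition~\ref{prop1:}; the two-subcase spillover verification is otherwise straightforward bookkeeping.
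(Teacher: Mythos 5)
Your proposal is correct and follows essentially the same route as the paper: the first clause is Lemma~\ref{L:apspill} plus Proposition~\ref{P:spillover}, and the second clause uses the dichotomy of Theorem~\ref{prop:wmsclass}, extracting a $\mu$-compatible isometric metric in the non-sensitive case and then applying Proposition~\ref{P:spillover}. The only difference is that you spell out the spillover verification for conservative ergodic $T$ and the transport of the isometric metric, steps the paper treats as immediate ("clearly" satisfies spillover; the metric is supplied directly by the classification theorem), and your more explicit versions are accurate.
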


\begin{proof}The first part follows from Proposition~\ref{P:spillover} and Lemma~\ref{L:apspill}.
When $T$ is conservative ergodic, if it is W-measurably sensitive then we are done, so suppose it is not. Then by Theorem~\ref{prop:wmsclass},  there exists a $\mu$-compatible metric $d'$ on $X$ for which $T$ is an invertible isometry.  As $T$ must satisfy the spillover condition, Proposition~\ref{P:spillover} completes the proof.
\end{proof}

We have  shown the existence of a sensitive metric for a large class of transformations.  We now  prove that there exists a class of transformations   which  do not admit any $\mu$-compatible metric. We are indebted to the referee who suggested to us an argument simpler than our original proof.  We start we a lemma from  \cite{J-S08, G-S08} whose proof we include for completeness; a similar lemma can be found in \cite{ZP12}, which we learned of only recently.

\begin{lemma}\label{L:sepcompmetric} Let $(X,\mu)$ be a standard space with a nonatomic $\sigma$-finite metric.
If $d$ is a separable Borel metric on $X$, then there exists a measure zero set $Z$ such 
that $d$ is $\mu$-compatible on $X\setminus Z$. If $T$ is nonsingular on
$(X,\mu)$, then $Z$ can be chosen so that $X\setminus Z$ is positively invariant.
\end{lemma}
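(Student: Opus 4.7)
The plan is to construct the exceptional null set $Z$ as the union of all ``bad'' basic balls, exploiting the countable base that separability provides. The underlying intuition is that a failure of $\mu$-compatibility at a point $y$ can always be witnessed by some basic ball containing $y$, so the set of points where compatibility fails is contained in a countable union of null basic balls.

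First, I would use separability of $d$ to choose a countable dense set $\{x_i\}\subset X$, so that the collection $\mathcal{B}=\{B(x_i,1/n):i,n\in\N\}$ is a countable base for the $d$-topology (each $B(x_i,1/n)$ is Borel since $d$ is Borel measurable). Then set
\[
Z_0=\bigcup\{B\in\mathcal{B}:\mu(B)=0\}.
\]
As a countable union of null sets, $Z_0$ is Borel with $\mu(Z_0)=0$. The key claim is that for every $y\in X\setminus Z_0$ and every $r>0$ we have $\mu(B(y,r))>0$: otherwise, picking $i$ and $n$ with $d(y,x_i)<1/n<r/2$ gives $y\in B(x_i,1/n)\subset B(y,r)$, forcing $\mu(B(x_i,1/n))=0$ and hence $y\in Z_0$, a contradiction. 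This yields $\mu$-compatibility of $d$ on $X\setminus Z_0$, since balls in $(X\setminus Z_0, d)$ differ from balls in $X$ by a set of measure zero.

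For the second statement, I would saturate $Z_0$ under backward orbits of $T$ by setting
\[
Z=\bigcup_{k\geq 0}T^{-k}(Z_0).
\]
Nonsingularity of $T$ ensures each $T^{-k}(Z_0)$ is null, so $\mu(Z)=0$. The inclusion $T^{-1}(Z)\subset Z$ is immediate from the construction, which is exactly the condition that $X\setminus Z$ be positively invariant. Since $Z\supset Z_0$, the compatibility argument above applies verbatim on $X\setminus Z$.

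There is no serious obstacle in this proof; the only subtle point is verifying that a hypothetical null ball $B(y,r)$ around a point $y\notin Z_0$ really would contain some basic ball of measure zero, which is just a routine use of the triangle inequality together with density of $\{x_i\}$.
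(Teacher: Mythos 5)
Your proof is correct and follows essentially the same route as the paper: both take $Z$ to be (a countable reduction of) the union of all null $d$-open sets -- the paper via the Lindel\"of property of a separable metric, you via an explicit countable base of balls -- and then saturate backwards, $Z=\bigcup_{k\geq 0}T^{-k}(Z_0)$, using nonsingularity to keep $Z$ null and obtain positive invariance of $X\setminus Z$.
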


\begin{proof} Let $Z$ consist of the union of all measure zero $d$-open sets. As the metric is separable, it is Lindel\"of, and so is $Z$. Thus $Z$ is a countable union of null sets, so it is null.
When $T$ is nonsingular, let $Z_1=\bigcup_{n=0}^\infty T^{-n}(Z)$. So $Z_1$ is null and $X\setminus Z_1$ is positively invariant.
\end{proof}

\begin{proposition}\label{T:existscompatible}
Let $(X,\mu,T)$ be a nonsingular system. 	
If $T$ is not aperiodic, then there exists no $\mu$-compatible metric $d$ on $X$  for which $T$ is W-measurable sensitive.
\end{proposition}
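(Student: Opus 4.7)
The plan is to prove the contrapositive by a direct pigeonhole argument on finite orbit codes. Since $T$ is not aperiodic, writing $A_m=\{x\in X:T^m x=x\}$, the set $\bigcup_{m\ge 1}A_m$ has positive measure; each $A_m$ is Borel (it is the preimage of the diagonal under the measurable map $x\mapsto(x,T^m x)$) and forward $T$-invariant. Fix $m$ with $\mu(A_m)>0$ and write $A=A_m$. On $A$ the map $T^m$ is the identity, so for any $x,y\in A$ we have $d(T^{n+m}x,T^{n+m}y)=d(T^n x,T^n y)$ for all $n\ge 0$, and hence
\[
\limsup_{n\to\infty}d(T^n x,T^n y)=\max_{0\le k<m}d(T^k x,T^k y).
\]

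Suppose, for contradiction, that some $\mu$-compatible metric $d$ makes $T$ W-measurably sensitive with constant $\delta>0$. Since $\mu$-compatibility forces $d$-separability (Lemma~1.1 of \cite{J-S08}), pick $r$ with $2r<\delta$ and cover $X$ by countably many $d$-open balls $\{B_i\}_{i\ge 1}$ of radius $r$. For each $m$-tuple $\vec{\imath}=(i_0,\ldots,i_{m-1})$ set
\[
C_{\vec{\imath}}=A\cap\bigcap_{k=0}^{m-1}T^{-k}(B_{i_k}).
\]
Each $x\in A$ belongs to some $C_{\vec{\imath}}$ (take $i_k$ with $T^k x\in B_{i_k}$), so these countably many cells cover $A$, and since $\mu(A)>0$ some cell $C_{\vec{\imath}^{\ast}}$ has positive measure.

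Fix any $x\in C_{\vec{\imath}^{\ast}}$. For every $y\in C_{\vec{\imath}^{\ast}}$ the points $T^k x$ and $T^k y$ both lie in the single ball $B_{i^{\ast}_k}$ of diameter less than $2r$, so $d(T^k x,T^k y)<2r$ for $k=0,\ldots,m-1$. Combining with the periodicity identity above,
\[
\limsup_{n}d(T^n x,T^n y)\le 2r<\delta
\]
for every $y$ in the positive-measure set $C_{\vec{\imath}^{\ast}}$, contradicting the W-sensitivity requirement that this $\limsup$ exceed $\delta$ for $\mu$-almost every $y$.

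The essential mechanism is that once two orbits start in the same coding cell and live on $A$, they stay uniformly within $2r$ of one another forever and therefore can never separate by $\delta$. The argument has no substantive obstacle; the only technical points to verify are Borel-measurability of $A_m$ (automatic in a standard Borel space) and countability of the coding partition, which is why $d$-separability---extracted from $\mu$-compatibility---is invoked. No Rohlin-type construction or analysis of the non-periodic part of $X$ is required.
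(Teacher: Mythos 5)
Your proof is correct, but it takes a genuinely different route from the paper. The paper first reduces to a positively invariant set $X_1=P_n=\{x:T^{n!}x=x\}$ of positive measure with a uniform period $N=n!$, then introduces the auxiliary metric $d_N(x,y)=\sum_{i=0}^{N-1}d(T^ix,T^iy)$, which is $T$-invariant on $X_1$, and invokes Lemma~\ref{L:sepcompmetric} (a separable Borel metric is $\mu$-compatible off a null set) to produce a positive-measure $d_N$-ball $B$ on which all iterates stay $\delta$-close; the contradiction is read off from the invariance of $d_N$. You instead bypass the auxiliary metric and that lemma entirely: you fix one $A_m$ of positive measure, observe that on it the distance sequence is $m$-periodic so the $\limsup$ is a max over one period, and then run a countable pigeonhole over coding cells $A\cap\bigcap_{k<m}T^{-k}(B_{i_k})$ built from a countable cover of $X$ by $r$-balls with $2r<\delta$; a positive-measure cell gives the required point $x$ and positive-measure set of non-separating $y$. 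Both arguments rest on the same two pillars---separability of $d$ (from $\mu$-compatibility, Lemma 1.1 of \cite{J-S08}, plus the standing assumption that $d$ is Borel on $X\times X$, so balls and cells are measurable) and periodicity collapsing the $\limsup$ to finitely many times---but yours is more self-contained and elementary (no Lemma~\ref{L:sepcompmetric}, no uniform-period reduction via $n!$, no need for the ball to be a ball of a compatible metric, only for some cell to have positive measure), while the paper's version reuses machinery already developed there and packages the finite-time control neatly in the invariant metric $d_N$. Your measurability remarks ($A_m$ Borel via the diagonal, countability of the coding partition) are the right ones, so there is no gap.
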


\begin{proof}\label{pf5:}
\label{pf6:} First we show there exists a positively invariant set of positive measure $X_1$ 
and a positive integer $N$ such that $T^N$ restricted to $X_1$ is the identity. 
Let $Per$ denote the set of periodic points of $T$. For each $n\in\N$ write $P_n=\{x\in X: T^{n!}(x)=x\}$. Then each $P_n$ is positively invariant, $P_n\subset P_{n+1}$ and $Per=\bigcup_{n\geq 1}P_n$. 
As $\mu(Per)=0$, then some $P_n$ has  positive measure and  $T^{n!}$ is the identity on $P_n$.
Let   $X_1=P_n$ and  $N=n!$.

Suppose now there exists a $\mu$-compatible metric $d$ for which $T$ is W-measurably sensitive with sensitivity constant $\delta > 0$.  
While $d$ may not be $\mu$-compatible on $X_1$, it is separable. It follows that the metric $d_N$ defined by
\[d_N(x,y)=\sum_{i=0}^{N-1}d(T^ix,T^iy)\]
is separable. By Lemma~\ref{L:sepcompmetric}, after deleting a set of measure zero,
we may assume $d_N$ is $\mu$-compatible on a positively invariant subset of full measure of $X_1$, that we may rename $X_1$.  Let $x\in X_1$ and let $0<\ve<\delta/2$.
Then the $d_N$ ball $B=B^{d_N}(x,\ve)$ has positive measure. So  for all $y,z\in B$, 
\[
d(T^iy,T^iz)\leq d_N(T^iy,T^iz) = d_N(y,z)<2\ve<\delta.
\]
Therefore $T$ is not W-measurably sensitive for $d$, a contradiction. Thus  no sensitive metric exists.
\end{proof}

\bibliographystyle{plain}
\bibliography{LiYorkeRef}

\end{document}